\newtheorem{theorem}{Theorem}[section]
\newtheorem{corollary}[theorem] {Corollary}
\newtheorem{lemma} [theorem]{Lemma}
\newtheorem{proposition}[theorem]{Proposition}
\newcommand{\Q}{{\mathbb Q}}
\begin{document}
	\label{'ubf'}  
	\setcounter{page}{1} 
	\markboth {\hspace*{-9mm} \centerline{\footnotesize
			Transcendental nature of $p$-adic digamma values}}
	{\centerline{\footnotesize 
			Tapas Chatterjee and Sonam Garg } \hspace*{-9mm}}
	
	\vspace*{-2cm}
	\begin{center}
		{{\textbf{Transcendental nature of $p$-adic digamma values
			}}\\
			\vspace{.2cm}
			\medskip
			{\sc Tapas Chatterjee\footnote{Research of the first author is partly supported by the core research grant CRG/2023/000804 of the Science and Engineering Research
Board of DST, Government of India.} }\\
			{\footnotesize  Department of Mathematics,}\\
			{\footnotesize Indian Institute of Technology Ropar, Punjab, India.}\\
			{\footnotesize e-mail: {\it tapasc@iitrpr.ac.in}}
			
			\medskip
			{\sc Sonam Garg\footnote{Research of the second author is supported by University Grants Commission (UGC), India under File No.: 972/(CSIR-UGC NET JUNE 2018).} }\\
			{\footnotesize Department of Mathematics, }\\
			{\footnotesize Indian Institute of Technology Ropar, Punjab, India.}\\
			{\footnotesize e-mail: {\it 2018maz0009@iitrpr.ac.in}}
			\medskip}
	\end{center}
	
	\thispagestyle{empty} 
	\vspace{-.4cm}
	\hrulefill
	\noindent
	\begin{abstract}  
		{\footnotesize }
		For a fixed prime $p$, Murty and Saradha (2008) studied the transcendental nature of special values of the $p$-adic digamma function, denoted as $\psi_p(r/p)+ \gamma_p$. 
		This research was later extended by Chatterjee and Gun in 2014, who investigated the case of $\psi_p(r/p^n)+ \gamma_p$, for any integer $n>1$. In this article, we generalize their results for distinct prime powers and explore the transcendental nature of the $p$-adic digamma values, with at most one exception.
		
		Further, we investigate the multiplicative independence of cyclotomic numbers satisfying certain conditions. Using this, we prove the transcendental nature of $p$-adic digamma values corresponding to $\psi_p(r/pq)+ \gamma_p$, where $p, q$ are distinct primes. 
	\end{abstract}
	\hrulefill
	
	\noindent 
	{\small \textbf{Key words and phrases}: Digamma function, Euler's constant, $p$-adic analogue of Baker's theory,  $p$-adic analogue of classical functions, Primitive roots, Units in the cyclotomic fields}.
	
	\noindent
	{\bf{Mathematics Subject Classification 2020:}} 11E95, 11J81, 11J86, 11J91.
	
	\vspace{-.37cm}
	
	\section{Introduction} \label{INT}
	
	The gamma function $\Gamma$ is yielded by the following improper integral:
	\begin{align*}
	\Gamma(z) = \int_{0}^{\infty}{e^{-x} x^{z-1}dx}, \hspace{6mm} \Re(z)> 0.
	\end{align*}
	Except for the non-positive integers, where it has a simple pole with residue $(-1)^n/n!$, it is meromorphically extended to the entire complex plane. The digamma function, which is represented by the symbol $\psi$, is the logarithmic derivative of the gamma function, that is:
	\begin{align*}
	\psi(x) = \frac{d}{dx}(\log \Gamma(x)) = \frac{\Gamma^{\prime}(x)}{\Gamma(x)}.
	\end{align*}
	
	In 1813, Gauss gave the formula for the digamma function at rational argument $r/q$ for $q >1 $ and $1 \leq r < q$, with $(r, q) =1$ by: 
	\begin{align*}
	\psi(r/q) = -\gamma -\log q + \displaystyle\sum_{b=1}^{q-1}{e^{-2\pi ibr/q} \log(1- e^{2\pi ib/q})},
	\end{align*}
	where it can be observed that $\psi(1) = -\gamma$, the famous Euler's constant. Murty and Saradha investigated these functions in 2007 and published results on the transcendence of a specific family of digamma values in \cite{MS1}, which were extended by Chatterjee and Gun in \cite{TCS}. Each of these classical number theoretic functions has various analogues in different areas such as $p$-adic and $q$-series with a number of nice properties. Many interesting transcendence results related to $q$-analogues of these classical functions such as Riemann zeta function, Euler's constant, etc. are studied in \cite{TCG, TCG1}.  Thus similar results ought to be anticipated in the $p$-adic case as well.  Many mathematicians have worked in this area and come up with some promising results. To begin, Morita in \cite{DB} introduced the $p$-adic analogue of gamma function, $\Gamma_p$, for all the natural numbers, which is defined as follows:
	\begin{align*}
	\Gamma_p(n) = (-1)^n \displaystyle \prod_{\substack{1 \leq t \leq n \\ p\nmid t}} t 
	\end{align*} 
	and extended it to a continuous function on $\mathbb{Z}_p$. Then in \cite{JD}, Diamond subsequently introduced the $p$-adic counterparts of the digamma function and the Euler's constant, which satisfy the several nice properties as in the classical case. Diamond also discussed two different approaches to the $p$-adic analogue of log$\Gamma(x)$. One of the approach is to change the functional equation and the other is to construct a sequence of functions $H_N$, which is locally holomorphic on $\mathbb{C}_p$. These sequence of functions are defined as follows:
	\begin{align*}
	H_N(x)= \displaystyle \lim_{k \rightarrow \infty} \frac{1}{p^k} \sum_{n=0}^{p^k-1}f_N(x+n), ~~~~\text{for}~~~~ N=1,2, \ldots,
	\end{align*}
	where
	\begin{align*}
	f_N(x)= \begin{cases} x\log(x) - x, & \text{if}~~ \nu_p(x)<N\\
	0, & \text{if}~~ \nu_p(x) \geq N,
	\end{cases}
	\end{align*} 
	where $\nu_p(x)$ is the $p$-adic valuation. Each $f_N$ is locally analytic on $\mathbb{C}_p$, so each $H_N$ is also locally analytic on $\mathbb{C}_p$. Also, $H_N$ satisfies the following relation:
	\begin{align*}
	H_N(x+1)= \begin{cases} H_N(x) + \log(x), & \text{if}~~ \nu_p(x)<N\\
	H_N(x), & \text{if}~~ \nu_p(x) \geq N.
	\end{cases}
	\end{align*}
	The derivative of $p$-adic analogue of log$\Gamma(x)$ function is known as $p$-adic digamma function $\psi_p(x)$ and is given by the expression:
	\begin{align*}
	\psi_p(x) = \displaystyle\lim_{k\rightarrow\infty}\frac{1}{p^k}\sum_{n=0}^{p^k -1} \log_p(x+n), 
	\end{align*}
	for any $x\in \mathbb{C}_p$. The $p$-adic analogue of Euler-Briggs-Lehmer constant for $r, q \in \mathbb{Z}$ with $q\geq 1$ and $\nu_p(r/q)<0$ is given by:
	\begin{align*}
	\gamma_p(r,q) = -\displaystyle\lim_{k\rightarrow\infty}\frac{1}{qp^k}\sum_{\substack{m=0\\ m \equiv r(\rm mod~q)}}^{qp^k-1}{\log_p m} 
	\end{align*}
	and when $\nu_p(r/q)\geq 0$, write $q = p^k q_1$ with $(p,q_1)=1$, then
	\begin{align*}
	\gamma_p(r,q)=\frac{p^{\varphi(q_1)}}{p^{\varphi(q_1)}-1}\displaystyle\sum_{n\in N(r,q)}{\gamma_p(r+nq, p^{\varphi(q_1)}q)},
	\end{align*}
	where $N(r,q) = \{n: 0\leq n < p^{\varphi(q_1)}, nq + r \not\equiv 0$ (mod $p^{\varphi(q_1)+k})\}$. Also, we have:
	\begin{align*}
	\gamma_p = \gamma_p(0,1) = -\frac{p}{p- 1}\displaystyle\lim_{k\rightarrow\infty}\frac{1}{p^k}\sum_{\substack{m=1\\ (m,p)=1}}^{p^k - 1}{\log_p m}.
	\end{align*}
	
	Like in the classical case, the $p$-adic analogue of Gauss theorem in $\mathbb{C}_p$ is given as:
	\begin{align}
	\psi_p(r/f) = -\log f -\gamma_p + \displaystyle \sum_{a=1}^{f-1}{\zeta^{-ar} \log_p (1- \zeta^a)}, \label{EQ7}
	\end{align}
	where $r, f \in \mathbb{Z}^{+}$, $r < f$ and $\nu_p(r/f) <0$. However, for $\nu_p(r/f) \geq 0$ and any $\mu$ such that $p^{\mu} \equiv 1 (\bmod f^*)$, where $f = p^kf^*$ with $(p,f^*)=1$, we have the following relation:
	\begin{align}
	\frac{p^{\mu}}{p^{\mu}-1}H^{\prime}_{\mu}(r/f)= -\log f -\gamma_p + \displaystyle \sum_{a=1}^{f-1}{\zeta^{-ar} \log_p (1- \zeta^a)}.\label{EQ6}
	\end{align}
	
	Furthermore, for $U_1 = B(1,1)= \{x \in \mathbb{C}_p : |x - 1|
	_p < 1 \}$, the $p$-adic logarithm of $x \in U_1$ is defined as: 
	\begin{align*}
	\log_p (x) = \log_p (1 + (x-1)) = \displaystyle\sum_{n=1}^{\infty}{(-1)^{n+1} \frac{(x-1)^n}{n}}
	\end{align*}
	and it can be extended to the whole $\mathbb{C}_p^{\times}$ as every element $\beta \in \mathbb{C}_p^{\times}$ can be uniquely written as: 
	\begin{align*}
	\beta = p^r \omega x,
	\end{align*} 
	where $r \in \mathbb{Q}$, $x \in U_1$, and $\omega$ is a root of unity of order prime to $p$ and thus one defines: 
	\begin{align*}
	\log_p(\beta) = \log_p(x).
	\end{align*}
	As a consequence of this, the $p$-adic logarithm is zero on roots of unity and thus
	\begin{align} 
	\log_p(1- \zeta_q^{-t}) = \log_p(1- \zeta_q^{t}).\label{EQ1}
	\end{align} 
	
	In \cite{MS2}, Murty and Saradha established numerous conclusions  involving the values of the $p$-adic digamma function, $\psi_p(r/p)$, for $1 \leq r < p$, which were later extended by Chatterjee and Gun in \cite{TCS} with the following theorem:
	
	\begin{theorem} {\textbf{(Chatterjee and Gun)}} \label{TH1}
		Fix an integer $n>1$. At most one element of the following set:
		\begin{align*}
		\{\psi_p(r/p^n) + \gamma_p : 1 \leq r < p^n, (r,p) =1\}
		\end{align*}
		is algebraic. Moreover, $\psi_p(r/p) + \gamma_p$ are distinct when $1 \leq r < p/2$.
	\end{theorem}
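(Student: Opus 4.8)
The plan is to convert each digamma value into a linear form, with algebraic coefficients, in the $p$-adic logarithms of the cyclotomic numbers $1-\zeta^{a}$, and then apply the $p$-adic analogue of Baker's theorem. Fix a primitive $p^{n}$-th root of unity $\zeta=\zeta_{p^{n}}$. Since $\log_{p}(p^{n})=n\log_{p}p=0$, the $p$-adic Gauss formula \eqref{EQ7} with $f=p^{n}$ gives
\begin{align*}
\psi_{p}(r/p^{n})+\gamma_{p}=\sum_{a=1}^{p^{n}-1}\zeta^{-ar}\log_{p}(1-\zeta^{a}),
\end{align*}
and by \eqref{EQ1}, pairing $a$ with $p^{n}-a$ rewrites this as $\sum_{a=1}^{(p^{n}-1)/2}(\zeta^{ar}+\zeta^{-ar})\log_{p}(1-\zeta^{a})$. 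In particular $\psi_{p}(r/p^{n})+\gamma_{p}=\psi_{p}((p^{n}-r)/p^{n})+\gamma_{p}$, so it suffices to show that no two distinct elements of the set can both be algebraic (and, for the last statement, to argue only for $1\le r<p/2$). The right-hand side is a $\overline{\mathbb{Q}}$-linear combination of the $p$-adic logarithms of the algebraic numbers $1-\zeta^{a}$, all of which lie in the domain of $\log_{p}$.

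For the first assertion, fix a maximal $\mathbb{Q}$-linearly independent subset $\{\log_{p}(1-\zeta^{a}):a\in B\}$ of $\{\log_{p}(1-\zeta^{a}):1\le a\le p^{n}-1\}$ and write $\psi_{p}(r/p^{n})+\gamma_{p}=\sum_{a\in B}\beta_{a}(r)\log_{p}(1-\zeta^{a})$ with $\beta_{a}(r)\in\overline{\mathbb{Q}}$. If $\psi_{p}(r_{1}/p^{n})+\gamma_{p}$ and $\psi_{p}(r_{2}/p^{n})+\gamma_{p}$ were both algebraic, subtracting would give an identity $\alpha=\sum_{a\in B}\bigl(\beta_{a}(r_{1})-\beta_{a}(r_{2})\bigr)\log_{p}(1-\zeta^{a})$ with $\alpha\in\overline{\mathbb{Q}}$. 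By the $p$-adic analogue of Baker's theorem (due to Brumer), $1$ together with the $\mathbb{Q}$-linearly independent logarithms $\{\log_{p}(1-\zeta^{a}):a\in B\}$ is linearly independent over $\overline{\mathbb{Q}}$; hence $\alpha=0$ and $\beta_{a}(r_{1})=\beta_{a}(r_{2})$ for every $a\in B$, so $\psi_{p}(r_{1}/p^{n})+\gamma_{p}=\psi_{p}(r_{2}/p^{n})+\gamma_{p}$. Thus all algebraic elements of the set coincide, and at most one element is algebraic. (Applied to a single value, the same argument in fact forces any algebraic value to equal $0$.) Note that this part uses nothing beyond \eqref{EQ7}, \eqref{EQ1}, and the $p$-adic Baker theorem.

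For the distinctness statement take $n=1$, write $\zeta=\zeta_{p}$, and suppose $\psi_{p}(r_{1}/p)+\gamma_{p}=\psi_{p}(r_{2}/p)+\gamma_{p}$ with $1\le r_{1},r_{2}<p/2$. Then $\sum_{a=1}^{(p-1)/2}g(a)\log_{p}(1-\zeta^{a})=0$, where $g(a)=(\zeta^{ar_{1}}+\zeta^{-ar_{1}})-(\zeta^{ar_{2}}+\zeta^{-ar_{2}})$. From $\prod_{a=1}^{p-1}(1-\zeta^{a})=\Phi_{p}(1)=p$ and $\log_{p}p=0$ one gets $\log_{p}(1-\zeta)=-\sum_{a=2}^{(p-1)/2}\log_{p}(1-\zeta^{a})$; substituting this and using that $\{\log_{p}(1-\zeta^{a}):2\le a\le(p-1)/2\}$ is $\mathbb{Q}$-linearly independent together with Baker, we obtain $g(a)=g(1)$ for $2\le a\le(p-1)/2$, hence for all $1\le a\le(p-1)/2$, and hence — since $g(-a)=g(a)$ — for all $a\in(\mathbb{Z}/p\mathbb{Z})^{\times}$. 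Summing over $(\mathbb{Z}/p\mathbb{Z})^{\times}$ and using $\sum_{b=1}^{p-1}\zeta^{b}=-1$ gives $(p-1)g(1)=0$, so $g\equiv 0$; in particular $\zeta^{r_{1}}+\zeta^{-r_{1}}=\zeta^{r_{2}}+\zeta^{-r_{2}}$, which forces $r_{1}\equiv\pm r_{2}\pmod{p}$, i.e.\ $r_{1}=r_{2}$ since $1\le r_{1},r_{2}<p/2$.

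The main obstacle is the structural input just used: that the only $\mathbb{Q}$-linear relations among the numbers $\log_{p}(1-\zeta^{a})$ are the evident norm relations, equivalently (after dividing by $1-\zeta$) that the $p$-adic logarithms of the cyclotomic units $\frac{1-\zeta^{a}}{1-\zeta}$, $2\le a\le(p-1)/2$, are $\mathbb{Q}$-independent, i.e.\ that the $p$-adic regulator of the cyclotomic units of $\mathbb{Q}(\zeta_{p})$ is nonzero. This amounts to Leopoldt's conjecture for the abelian field $\mathbb{Q}(\zeta_{p})$, which is known (again through the $p$-adic analogue of Baker's theorem). Granting it, the remaining ingredients — the Gauss formula, the symmetrization through \eqref{EQ1}, an elementary root-of-unity sum, and one clean appeal to $p$-adic Baker — are routine.
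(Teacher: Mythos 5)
Your overall skeleton is the same as the one behind the paper's arguments (and behind Chatterjee--Gun's original proof): apply the $p$-adic Gauss formula \eqref{EQ7} with $\log_p(p^n)=0$, symmetrize via \eqref{EQ1}, and kill the resulting linear form in $p$-adic logarithms with a Baker-type theorem plus an independence statement for cyclotomic numbers; your root-of-unity summation for the distinctness part is essentially the nonvanishing-of-coefficients argument that the paper leaves implicit. But there is a genuine gap in how you invoke the $p$-adic Baker theorem. The results actually available (Kaufman's theorem as quoted in the paper, and Murty--Saradha's Theorem \ref{TH5} derived from it) require the $\alpha_i$ to satisfy $|\alpha_i-1|_p<p^{-c}$, a condition one arranges by replacing a $p$-adic \emph{unit} $\alpha$ by a suitable power $\alpha^T$. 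You apply the theorem directly to the numbers $1-\zeta_{p^n}^a$, which are not $p$-adic units: every $1-\zeta_{p^n}^a$ has positive $p$-adic valuation, so no power of it is close to $1$ and the hypothesis cannot be met. (Brumer's theorem, which you cite for the inhomogeneous statement, is moreover the homogeneous version and is likewise stated for numbers in the domain $|x-1|_p<1$.) The missing step is precisely the reduction the paper performs: use $\prod_{(a,p)=1}(1-\zeta_{p^n}^a)=p$ and $\log_p p=0$ to express every $\log_p(1-\zeta_{p^n}^a)$ as a $\mathbb{Q}$-linear combination of logarithms of the cyclotomic \emph{units} $(1-\zeta_{p^n}^a)/(1-\zeta_{p^n})$, which are coprime to $p$, and only then apply Theorem \ref{TH5} — this is exactly the content and purpose of Lemma \ref{L1} (resting on Proposition \ref{P1}), which your write-up bypasses. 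Your ``maximal $\mathbb{Q}$-linearly independent subset'' maneuver is fine in itself (the paper uses the same device in the proofs of Theorem \ref{TH3} and Corollary \ref{C1}), but it must be carried out among the unit logarithms for the quoted theorems to apply.

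A second, less serious, inaccuracy: the structural input you flag at the end is not Leopoldt's conjecture. What is needed is that the cyclotomic units $(1-\zeta^a)/(1-\zeta)$, $1<a<p^n/2$, are multiplicatively independent (the classical index-formula fact, recorded here as Proposition \ref{P1}); from this, $\mathbb{Q}$-linear independence of their $p$-adic logarithms under the fixed embedding is elementary, because the kernel of $\log_p$ on algebraic numbers consists of roots of unity times rational powers of $p$, so an integer relation among the logarithms forces a multiplicative relation after raising to the order of a root of unity. The upgrade from $\mathbb{Q}$-independence to $\overline{\mathbb{Q}}$-independence (with the constant term) is then Kaufman's theorem, not Brumer's, and again only after the passage to units. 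With these repairs your argument becomes, in substance, the paper's: Gauss formula, rewriting in terms of $\log_p(1-\zeta)$ and the unit logarithms, Proposition \ref{P1} plus Theorem \ref{TH5} via Lemma \ref{L1}, and your concluding character-sum computation for the distinctness statement.
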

	
	In this article, we further extend the above result for the set of rational primes $\mathcal{P}$ as follows:
	
	\begin{theorem} \label{TH2}
		Let $p$ be a prime and $n>1$ be an integer. Consider the sets $S_1$ and $S_2$ where
		\begin{align*}
		S_1&= \{\psi_p(r/p^n) + \gamma_p : 1 \leq r < p^n,  (r,p) = 1 \} ~~~ \text{and} \\
		S_2 &= \Bigg\{\frac{p^{\mu}}{p^{\mu}-1}H^{\prime}_{\mu}(r/q^n) +\gamma_p : 1 \leq r < q^n, (r,q) = 1, q\neq p, q \in \mathcal{P} \Bigg\},
		\end{align*}
		where $\mu$ as in Eq. (\ref{EQ6}). Then, all the elements of $S_1 \cup S_2$ are transcendental with at most one exception. Moreover, the numbers $\frac{p^{\mu}}{p^{\mu}-1}H^{\prime}_{\mu}(r/q) +\gamma_p$ are distinct, when $1 \leq r < q/2$ and $q \in \mathcal{P}$.
	\end{theorem}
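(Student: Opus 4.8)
\emph{Proof proposal.}
The plan is to handle the two families at once. For a prime $\ell$ and an integer $r$ with $1\le r<\ell^{n}$ and $(r,\ell)=1$, set $A_\ell(r)=\psi_p(r/p^{n})+\gamma_p$ when $\ell=p$, and $A_\ell(r)=\frac{p^{\mu}}{p^{\mu}-1}H'_{\mu}(r/\ell^{n})+\gamma_p$ when $\ell\neq p$. The first step is a closed form. Applying the $p$-adic Gauss formula (\ref{EQ7}) when $\ell=p$ (valid since $\nu_p(r/p^{n})<0$) and (\ref{EQ6}) when $\ell\neq p$ (valid since $\nu_p(r/\ell^{n})\ge0$), and using that $\log_p\ell=0$ exactly for $\ell=p$, one gets
\[
A_\ell(r)=-n\log_p\ell+\sum_{a=1}^{\ell^{n}-1}\zeta_{\ell^{n}}^{-ar}\log_p(1-\zeta_{\ell^{n}}^{a}).
\]
Then, grouping by $\gcd(a,\ell^{n})=\ell^{n-j}$, writing $a=\ell^{n-j}b$ with $(b,\ell)=1$, $1\le b<\ell^{j}$ (so $\zeta_{\ell^{n}}^{a}=\zeta_{\ell^{j}}^{b}$), and pairing $b$ with $\ell^{j}-b$ via (\ref{EQ1}), one rewrites $A_\ell(r)$ as an explicit $\mathbb{Q}(\zeta_{\ell^{n}})$-linear combination of the finite family
\[
\mathcal{L}_\ell=\{\log_p\ell\}\cup\{\log_p(1-\zeta_{\ell^{j}}^{b}):1\le j\le n,\ (b,\ell)=1,\ 1\le b<\ell^{j}/2\}
\]
(dropping $\log_p\ell=0$ when $\ell=p$), the coefficient of $\log_p(1-\zeta_{\ell^{j}}^{b})$ being $\zeta_{\ell^{j}}^{br}+\zeta_{\ell^{j}}^{-br}$.

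The core step is to pin down all $\mathbb{Q}$-linear relations among these $p$-adic logarithms. For a fixed $\ell$, the norm identities $\prod_{(b,\ell)=1,\ 1\le b<\ell^{j}}(1-\zeta_{\ell^{j}}^{b})=\ell$ for $1\le j\le n$ give the ``obvious'' relations---in particular they express $\log_p\ell$ through the cyclotomic logarithms---and I would invoke Leopoldt's conjecture for the abelian field $\mathbb{Q}(\zeta_{\ell^{n}})$, equivalently the non-vanishing of the $p$-adic regulator of its cyclotomic units (which holds since the field is abelian over $\mathbb{Q}$), to conclude there are no others; hence $\mathcal{L}_\ell$ contains a $\mathbb{Q}$-linearly independent subset spanning its $\mathbb{Q}$-span. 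For two distinct primes $\ell\neq\ell'$ (one possibly $p$), I would pass to the compositum $\mathbb{Q}(\zeta_{(\ell\ell')^{n}})=\mathbb{Q}(\zeta_{\ell^{n}})\mathbb{Q}(\zeta_{\ell'^{n}})$: a short ideal-and-norm computation, using that $\mathbb{Q}(\zeta_{\ell^{n}})$ and $\mathbb{Q}(\zeta_{\ell'^{n}})$ are linearly disjoint over $\mathbb{Q}$, shows that in any multiplicative relation among the $1-\zeta_{\ell^{j}}^{b}$ and the $1-\zeta_{\ell'^{j}}^{b}$ the $\ell$-part and the $\ell'$-part are separately roots of unity; hence every $\mathbb{Q}$-relation among $\mathcal{L}_\ell\cup\mathcal{L}_{\ell'}$ splits into one in each, and inductively any finite union of such families still admits a $\mathbb{Q}$-linearly independent spanning subset, consisting of logarithms of multiplicatively independent algebraic numbers.

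Granting this, the transcendence assertion is immediate. If $x=A_{\ell_1}(r_1)$ and $y=A_{\ell_2}(r_2)$ are both algebraic, then writing $x$ and $y$ through a $\mathbb{Q}$-linearly independent spanning subset $\{\log_p\alpha_1,\dots,\log_p\alpha_m\}$ of $\mathcal{L}_{\ell_1}\cup\mathcal{L}_{\ell_2}$, the difference $x-y$ is simultaneously an algebraic number and a $\overline{\mathbb{Q}}$-linear combination $\sum_i\beta_i\log_p\alpha_i$, i.e.\ a $\overline{\mathbb{Q}}$-linear relation among $1,\log_p\alpha_1,\dots,\log_p\alpha_m$. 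By the $p$-adic analogue of Baker's theorem (Brumer), these are $\overline{\mathbb{Q}}$-linearly independent, so every $\beta_i=0$ and $x-y=0$. Thus any two algebraic members of $S_1\cup S_2$ coincide, so at most one element of $S_1\cup S_2$ is algebraic.

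For the distinctness claim I would take $n=1$, $\ell=q\in\mathcal{P}$; the case $q=p$ is the ``moreover'' of Theorem~\ref{TH1}, so assume $q\neq p$. If $A_q(r_1)=A_q(r_2)$ with $1\le r_1,r_2<q/2$, subtracting the closed forms gives
\[
\sum_{1\le b<q/2}[(\zeta_q^{br_1}+\zeta_q^{-br_1})-(\zeta_q^{br_2}+\zeta_q^{-br_2})]\log_p(1-\zeta_q^{b})=0 ,
\]
and the $(q-1)/2$ logarithms $\log_p(1-\zeta_q^{b})$ here are $\mathbb{Q}$-linearly independent: the cyclotomic units of $\mathbb{Q}(\zeta_q)$ are $p$-adically independent (Leopoldt) and of full rank, and $\log_p(1-\zeta_q)$ is independent of their logarithms since $1-\zeta_q$ has norm $q\neq\pm1$. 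So Baker's $p$-adic theorem forces all the coefficients, which lie in $\mathbb{Z}[\zeta_q]$, to vanish, and taking $b=1$ gives $\zeta_q^{r_1}+\zeta_q^{-r_1}=\zeta_q^{r_2}+\zeta_q^{-r_2}$, hence $r_1\equiv\pm r_2\pmod q$, which with $1\le r_1,r_2<q/2$ forces $r_1=r_2$. The main obstacle in all of this is the linear-independence step: determining precisely which $\mathbb{Q}$-relations hold among the $\log_p(1-\zeta_{\ell^{j}}^{b})$ uniformly over $j\le n$ and over the finitely many primes $\ell$ occurring, and checking that the cross-prime relations genuinely split, so that the relation manufactured from ``two algebraic values'' is non-trivial unless those values agree---this is where Brumer's theorem and the non-vanishing of $p$-adic regulators of cyclotomic units in abelian fields are essential, and the small cases $p=2$ (and $q\le3$ in the distinctness claim) require the usual separate check.
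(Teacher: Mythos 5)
Your overall strategy is the same as the paper's: apply the $p$-adic Gauss formulas (\ref{EQ7}) and (\ref{EQ6}), regroup the sum by $\gcd$ and pair conjugate terms via (\ref{EQ1}), and then kill the resulting linear form in $p$-adic logarithms of cyclotomic numbers by a $p$-adic Baker-type theorem together with (multiplicative) independence of those cyclotomic numbers. The difference is that the paper gets the independence input and the transcendence input ready-made: Proposition \ref{P1} (multiplicative independence of $1-\zeta_{q^n}$ and the ratios $(1-\zeta_{q^n}^a)/(1-\zeta_{q^n})$ across distinct prime powers) and Theorem \ref{TH5} (Murty--Saradha's consequence of Kaufman's inhomogeneous estimate), packaged as Lemma \ref{L1}, which is exactly what your ``core step'' has to reproduce. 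Here your justification is off-target: Leopoldt's conjecture (non-vanishing of the $p$-adic regulator, a statement about all embeddings of the global units) is neither sufficient as stated nor actually needed. Since $\log_p\beta=0$ for algebraic $\beta$ exactly when $\beta$ is a root of unity times a rational power of $p$, a $\mathbb{Q}$-linear relation among your logarithms is the same thing as a multiplicative relation among the numbers $1-\zeta_{\ell^j}^b$ modulo roots of unity and powers of $p$; so the real content is precisely the multiplicative-independence statement of Proposition \ref{P1} (classical rank results for cyclotomic units of prime-power conductor plus the norm/linear-disjointness argument you sketch, which is indeed how Propositions \ref{P1} and \ref{P4} are proved), together with the distribution relations between levels $j\le n$ that you would have to quotient out explicitly. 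Relatedly, Brumer's theorem is the homogeneous statement; the inhomogeneous independence of $1,\log_p\alpha_1,\dots,\log_p\alpha_m$ that your ``$x-y=0$'' step uses is what Kaufman's theorem (hence Theorem \ref{TH5}) supplies, and it carries the condition $|\alpha_i-1|_p<p^{-c}$, which the paper removes by replacing $\alpha_i$ with a suitable power $\alpha_i^T$ (see the proof of Lemma \ref{L3}); your write-up skips this standard but necessary step.

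The second concrete gap is in the ``moreover'' part. The claim is that the numbers $\frac{p^{\mu}}{p^{\mu}-1}H'_{\mu}(r/q)+\gamma_p$ are pairwise distinct as $(r,q)$ varies with $1\le r<q/2$ and $q\in\mathcal{P}$, so one must also rule out coincidences between different primes $q_1\neq q_2$. The paper's proof treats this as a separate case (its Case 2), where after expansion the coefficient of $\log_p\bigl((1-\zeta_{q_1}^a)/(1-\zeta_{q_1})\bigr)$ is $\zeta_{q_1}^{-ar_1}+\zeta_{q_1}^{ar_1}-2$, visibly not all zero, and Lemma \ref{L1} applies. Your proposal only handles the fixed-modulus case $q_1=q_2$ (where your argument, modulo the independence input above, is fine and forces $r_1=r_2$); the cross-modulus case is within reach of your machinery but is simply not addressed, so as written the distinctness assertion is only partially proved.
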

	
	Further, we proceed to establish the result for the product of two distinct primes, wherein these primes satisfy \textbf{Property} $I$, which is given as follows:
	
	\textbf{Property $I$:} Let $m$ be a natural number such that $m=p_1^{\alpha_1}p_2^{\alpha_2}$, with $(\alpha_1, \phi (p_2^{\alpha_2})) = 1 = (\alpha_2, \phi (p_1^{\alpha_1}))$, where $p_1$, $p_2$ are odd primes, $\alpha_1$, $\alpha_2 \in \mathbb{N}$, and satisfies the following:
	\begin{enumerate}
		\item $ p_1 \equiv p_2 \equiv 3\pmod 4$: $p_1$ and $p_2$ are semi-primitive roots $\bmod~p_2^{\alpha_2}$ , $\bmod~p_1^{\alpha_1}$, respectively, or
		\item $p_1$, $p_2$ are primitive roots $\bmod~p_2^{\alpha_2}$ and $\bmod~p_1^{\alpha_1}$, respectively.
	\end{enumerate}
	
	\textbf{Property $II$:} Let $\mathcal{M}$ be a finite set of natural numbers with $\vert \mathcal{M} \vert = n$, containing pairwise co-prime integers $m_i$, where $1 \leq i \leq n$ such that $m_i$ satisfies \textbf{Property} $I$. 
	
	Let $\mathcal{J}$ consists of prime factors of $\{m_i\}_{i=1}^n$, where $m_i \in \mathcal{M}$. The theorems are then stated as follows: 
	
	\begin{theorem} \label{TH6}
		Let $p, ~q \in \mathcal{J}$ be any two primes such that $m =pq \in \mathcal{M}$. Then, the elements of the following set: 
		$$S_3 = \{\psi_p(r/pq) + \gamma_p : 1 \leq r < pq,  (r,pq) = 1\}$$
		are transcendental with at most one exception. Moreover, the numbers $\psi_p(r/pq) + \gamma_p$ are distinct when $1 \leq r < pq/2$ and $(r,pq) = 1$.
	\end{theorem}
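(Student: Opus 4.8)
The plan is to reduce Theorem~\ref{TH6} to a statement about multiplicative independence of the cyclotomic numbers $1-\zeta_{pq}^{a}$ and then invoke the $p$-adic analogue of Baker's theorem, following the template of Theorem~\ref{TH1} and of Murty--Saradha for conductors $p^{n}$ and $p$. The novelty is that $pq$ is not a prime power, so the lattice of $\mathbb{Q}$-linear relations among the relevant $p$-adic logarithms is controlled by \textbf{Property}~$I$ rather than by the classical prime-power structure.

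First, since $(r,pq)=1$ we have $\nu_p(r/pq)=-1<0$, so Eq.~(\ref{EQ7}) applies with $f=pq$; writing $\zeta=\zeta_{pq}$ and using $\log_p p=0$ it gives
\begin{align*}
\psi_p(r/pq)+\gamma_p \;=\; -\log_p q \;+\; \sum_{a=1}^{pq-1}\zeta^{-ar}\,\log_p(1-\zeta^{a}).
\end{align*}
Sorting the inner sum by $\gcd(a,pq)\in\{1,p,q\}$ identifies the summands as $p$-adic logarithms of cyclotomic units (for $(a,pq)=1$), of $1-\zeta_q^{b}$ (for $a=pb$), and of $1-\zeta_p^{c}$ (for $a=qc$), while Eq.~(\ref{EQ1}) shows that replacing $r$ by $pq-r$ leaves the sum unchanged; hence $\psi_p(r/pq)+\gamma_p=\psi_p((pq-r)/pq)+\gamma_p$, and $S_3$ consists of exactly $\phi(pq)/2$ distinct numbers. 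It therefore suffices to prove that the difference $\psi_p(r_1/pq)-\psi_p(r_2/pq)$ is transcendental whenever $r_1\not\equiv\pm r_2\pmod{pq}$: this yields the ``at most one exception'' (two algebraic elements of $S_3$ would have algebraic difference) and, for distinct $r_1,r_2\in[1,pq/2)$ coprime to $pq$ (where automatically $r_1\not\equiv\pm r_2$), the asserted distinctness.

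So let $r_1\not\equiv\pm r_2\pmod{pq}$ and suppose, for contradiction, that this difference is algebraic. The $-\log_p q$ terms cancel, giving
\begin{align*}
\psi_p(r_1/pq)-\psi_p(r_2/pq)\;=\;\sum_{a=1}^{pq-1}\bigl(\zeta^{-ar_1}-\zeta^{-ar_2}\bigr)\log_p(1-\zeta^{a})\;\in\;\overline{\mathbb{Q}}.
\end{align*}
This is a $\overline{\mathbb{Q}}$-linear combination of $p$-adic logarithms of algebraic numbers; by the $p$-adic analogue of Baker's theorem (Brumer) --- $p$-adic logarithms of algebraic numbers that are $\mathbb{Q}$-linearly independent are, together with $1$, linearly independent over $\overline{\mathbb{Q}}$ --- such a combination is transcendental unless it equals $0$. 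Hence the sum vanishes, and, since for algebraic $\beta$ one has $\log_p\beta=0$ exactly when $\beta$ is a rational power of $p$ times a root of unity, the vector $\bigl(\zeta^{-ar_1}-\zeta^{-ar_2}\bigr)_{1\le a\le pq-1}$ must lie in the $\overline{\mathbb{Q}}$-span of the lattice of multiplicative relations among the numbers $1-\zeta^{a}$ modulo $p^{\mathbb{Q}}$ and roots of unity.

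The crux --- and the step I expect to be the main obstacle --- is to determine this relation lattice. The ``obvious'' relations come from complex conjugation ($1-\zeta^{-a}$ versus $1-\zeta^{a}$), from the product formulas $\prod_{(a,pq)=1}(1-\zeta^{a})=1$, $\prod_{b}(1-\zeta_q^{b})=q$, $\prod_{c}(1-\zeta_p^{c})=p$, and from the norm relations linking the three layers. The assertion to establish is that under \textbf{Property}~$I$ these generate the full lattice, equivalently that the subgroup of $\overline{\mathbb{Q}}^{\times}/(p^{\mathbb{Q}}\cdot\mu)$ generated by the $1-\zeta^{a}$ has the expected rank; this is exactly where the multiplicative independence of cyclotomic numbers enters, and where the primitive- and semi-primitive-root hypotheses (with the separate treatment of primes $\equiv 3\pmod 4$) are used to rule out unexpected relations. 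Granting this, a direct character-sum computation shows that a coefficient vector of the special shape $\bigl(\zeta^{-ar_1}-\zeta^{-ar_2}\bigr)_a$ can lie in the $\overline{\mathbb{Q}}$-span of the obvious relations only when $r_1\equiv\pm r_2\pmod{pq}$, contradicting our choice of $r_1,r_2$. This establishes both assertions of Theorem~\ref{TH6}.
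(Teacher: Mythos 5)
Your reduction is the same as the paper's: apply the $p$-adic Gauss formula (\ref{EQ7}) with $f=pq$, cancel $\log_p(pq)$, split the sum over $a$ according to $\gcd(a,pq)\in\{1,p,q\}$, fold $a\leftrightarrow pq-a$ using (\ref{EQ1}), and reduce both the ``at most one exception'' claim and the distinctness claim to the transcendence of a nonzero $\overline{\mathbb{Q}}$-linear form in $\log_p(1-\zeta_{pq})$, $\log_p\bigl(\frac{1-\zeta_{pq}^a}{1-\zeta_{pq}}\bigr)$, $\log_p\bigl(\frac{1-\zeta_q^b}{1-\zeta_q}\bigr)$, $\log_p\bigl(\frac{1-\zeta_p^c}{1-\zeta_p}\bigr)$. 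However, the step you yourself flag as ``the crux'' and then dispose of with ``Granting this'' is precisely the new content of the paper and is nowhere established in your proposal. The paper proves it as Proposition \ref{P4}: the system $1-\zeta_{p}$, $1-\zeta_{q}$, $\frac{1-\zeta_{pq}^{a}}{1-\zeta_{pq}}$, $\frac{1-\zeta_{p}^{b}}{1-\zeta_{p}}$, $\frac{1-\zeta_{q}^{c}}{1-\zeta_{q}}$ is multiplicatively independent under \textbf{Property} $I$/$II$. Its proof is not a ``direct lattice/character-sum computation'': it takes norms to kill the exponents of $1-\zeta_p$ and $1-\zeta_q$, uses the linear disjointness $\mathbb{Q}(\zeta_{m_1})\cap\mathbb{Q}(\zeta_r)=\mathbb{Q}$ together with a squaring trick to remove a $\pm1$ ambiguity and decouple the different conductors, and finally invokes the Pei--Feng criterion (Proposition \ref{P3}) --- this is the only place the primitive/semi-primitive root hypotheses enter, and you cannot simply assert that they ``rule out unexpected relations'' without this input. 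The independence statement is then combined with the Kaufman/Murty--Saradha theorem (Theorem \ref{TH5}) into Lemma \ref{L3}, which is what the paper actually applies.

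Two further points would need attention even granting the independence. First, your appeal to a Brumer-type statement (``$\mathbb{Q}$-linearly independent $p$-adic logs are $\overline{\mathbb{Q}}$-linearly independent together with $1$'') is not the tool the paper has available: Theorem \ref{TH5} carries the hypothesis $|\alpha_i-1|_p<p^{-c}$, which Lemma \ref{L3} satisfies by replacing each $\alpha$ by a suitable power $\alpha^{T}$ with $|\alpha^{T}-1|_p<p^{-K}$; if you use your qualitative statement instead you must still reduce the full family $\{\log_p(1-\zeta_{pq}^a)\}$ to a $\mathbb{Q}$-basis and show the induced coefficients are nonzero, which again is exactly the content of Proposition \ref{P4}. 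Second, your final claim that a coefficient vector of the shape $(\zeta^{-ar_1}-\zeta^{-ar_2})_a$ lies in the span of the ``obvious relations'' only when $r_1\equiv\pm r_2\pmod{pq}$ is asserted, not proved; in the paper's formulation this corresponds to checking that the coefficients $\alpha_a=\zeta^{-ar_1}+\zeta^{ar_1}-\zeta^{-ar_2}-\zeta^{ar_2}$ are not all zero when $r_1\not\equiv\pm r_2$, a short but necessary verification before Lemma \ref{L3} can be applied. As it stands, your proposal reproduces the paper's outer reduction but leaves its two essential ingredients (Proposition \ref{P4} via Pei--Feng, and the non-vanishing of the coefficient vector) unproved.
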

	
	\begin{theorem} \label{TH3}
		Let $p$ be a prime. Then, the elements of the following set:
		\begin{align*}
		S_4 = \Bigg\{\frac{p^{\mu}}{p^{\mu}-1}H^{\prime}_{\mu}(r/m_i) +\gamma_p : 1 \leq r < m_i, ~1 \leq i \leq n,~ (r,m_i) = 1, p \nmid m_i, ~m_i \in \mathcal{M}\Bigg\},
		\end{align*}
		where $\mu$ as in Eq. (\ref{EQ6}), are transcendental with at most one exception.
	\end{theorem}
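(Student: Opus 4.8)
The plan is to turn each member of $S_4$ into a $p$-adic linear form in logarithms of algebraic numbers via the $p$-adic Gauss formula, and then invoke the $p$-adic analogue of Baker's theorem. Fix $m_i=p_{i1}^{\alpha_{i1}}p_{i2}^{\alpha_{i2}}\in\mathcal{M}$ and an integer $r$ with $1\le r<m_i$, $(r,m_i)=1$. Since $p\nmid m_i$ we have $\nu_p(r/m_i)\ge 0$ with $m_i$ itself playing the role of $f^{*}$ in Eq.~(\ref{EQ6}); choosing $\mu$ with $p^{\mu}\equiv 1\pmod{m_i}$, that identity gives
\begin{align*}
\frac{p^{\mu}}{p^{\mu}-1}H'_{\mu}(r/m_i)+\gamma_p
= -\log_p m_i+\sum_{a=1}^{m_i-1}\zeta_{m_i}^{-ar}\,\log_p\bigl(1-\zeta_{m_i}^{a}\bigr),
\end{align*}
so that $\gamma_p$ cancels. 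Grouping the sum by $d=m_i/\gcd(a,m_i)$ rewrites the right-hand side as a $\overline{\Q}$-linear combination, with root-of-unity coefficients, of the numbers $\log_p(1-\zeta_d^{a})$ over divisors $d\mid m_i$, $d>1$, with $(a,d)=1$; moreover, since $\Phi_{p_{ij}^{\alpha_{ij}}}(1)=p_{ij}$, we have $\log_p m_i=\alpha_{i1}\log_p p_{i1}+\alpha_{i2}\log_p p_{i2}$ with $\log_p p_{ij}=\sum_{(a,p_{ij})=1}\log_p(1-\zeta_{p_{ij}^{\alpha_{ij}}}^{a})$. Hence every element of $S_4$ is a $\overline{\Q}$-linear form, with coefficients in $\overline{\Q}$, in the finite set $\mathcal{L}$ of $p$-adic logarithms $\log_p(1-\zeta_d^{a})$ with $d\mid m_i$ for some $i$, $d>1$, $(a,d)=1$.

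Now suppose, towards a contradiction, that two distinct elements $\xi_1,\xi_2$ of $S_4$ are algebraic. Pick a maximal $\Q$-linearly independent subset $\lambda_1,\dots,\lambda_s$ of $\mathcal{L}$ and, using the $\Q$-linear relations among the members of $\mathcal{L}$, rewrite the algebraic number $\xi_1-\xi_2$ as $\sum_{k=1}^{s}b_k\lambda_k$ with $b_k\in\overline{\Q}$. The $\lambda_k$ are $\Q$-linearly independent $p$-adic logarithms of algebraic numbers, so by the $p$-adic analogue of Baker's theorem the numbers $1,\lambda_1,\dots,\lambda_s$ are $\overline{\Q}$-linearly independent; consequently $\xi_1-\xi_2=0$, contradicting distinctness. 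This yields the "at most one exception" statement. The arithmetic content lies beneath the surface, in the structure of $\mathcal{L}$: because each $m_i$ is an abelian conductor, the multiplicative independence of the cyclotomic numbers $1-\zeta_{m_i}^{a}$ established earlier in the paper under \textbf{Property~$I$} — the primitive-root conditions, or the semi-primitive-root conditions when $p_{i1}\equiv p_{i2}\equiv 3\pmod 4$ — propagates to $\Q$-linear independence of the corresponding $p$-adic logarithms via Brumer's non-vanishing of the $p$-adic regulator, the only surviving relations being the conjugation relations $\log_p(1-\zeta_d^{-a})=\log_p(1-\zeta_d^{a})$ of Eq.~(\ref{EQ1}) and the distribution (norm) relations; and the pairwise coprimality of the members of $\mathcal{M}$ (that is, \textbf{Property~$II$}), which makes the fields $\Q(\zeta_{m_i})$ linearly disjoint over $\Q$, lets one assemble these into an explicit maximal independent subset of $\mathcal{L}$ spanning it, and also supplies the distinctness refinements exactly as in Theorems~\ref{TH1}, \ref{TH2} and~\ref{TH6}.

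The step demanding the most care is the reduction of the first paragraph together with its supporting multiplicative-independence input: one must verify that the contribution $-\log_p m_i=-\alpha_{i1}\log_p p_{i1}-\alpha_{i2}\log_p p_{i2}$ is genuinely absorbed by the logarithms attached to the prime-power divisors $d=p_{ij}^{\beta}$ of $m_i$, and that across the pairwise coprime moduli of $\mathcal{M}$ the cyclotomic numbers remain jointly multiplicatively independent up to the conjugation and distribution relations — which is precisely where \textbf{Property~$I$} and \textbf{Property~$II$} are used. Granting the relevant statement from the earlier sections, the rewriting in the second paragraph and the final appeal to the $p$-adic analogue of Baker's theorem are routine and entirely parallel to the proofs of Theorems~\ref{TH2} and~\ref{TH6}.
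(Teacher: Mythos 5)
Your proposal is correct and takes essentially the same route as the paper: Eq.~(\ref{EQ6}) cancels $\gamma_p$, the difference of two putatively algebraic elements of $S_4$ becomes an algebraic-coefficient linear form in $p$-adic logarithms of cyclotomic numbers, and applying Theorem~\ref{TH5} to a maximal independent subset of those logarithms (after the standard power trick $|\alpha^{T}-1|_p<p^{-K}$ to meet the closeness hypothesis, as in Lemma~\ref{L3}) forces that difference to be transcendental or zero, a contradiction either way. The extra machinery in your final paragraph (regulator non-vanishing, distribution relations, an explicit description of the independent set via \textbf{Property}~$I$/$II$) is not needed for this abstract argument and is likewise not used in the paper's proof of this particular theorem.
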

	
	Theorem \ref{TH6} and Theorem \ref{TH3} gives an important corollary which is stated as follows:
	\begin{corollary}\label{C1}
		All the elements of $S_3 \cup S_4$ are transcendental with at most one exception.
	\end{corollary}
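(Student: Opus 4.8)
The plan is to derive a contradiction from the assumption that $S_3 \cup S_4$ contains two distinct algebraic elements, using Theorems \ref{TH6} and \ref{TH3} to localize these elements and then the $p$-adic analogue of Baker's theorem to annihilate the resulting linear form in $p$-adic logarithms. So suppose $x,y \in S_3 \cup S_4$ are distinct and both algebraic. By Theorem \ref{TH6} they cannot both lie in $S_3$, and by Theorem \ref{TH3} they cannot both lie in $S_4$; hence, after relabelling, $x = \psi_p(r_1/pq) + \gamma_p \in S_3$ and $y = \frac{p^{\mu}}{p^{\mu}-1}H^{\prime}_{\mu}(r_2/m_i) + \gamma_p \in S_4$, with $(r_1,pq)=1$, $(r_2,m_i)=1$, $p \nmid m_i$ and $pq, m_i \in \mathcal{M}$. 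Since $\mathcal{M}$ consists of pairwise coprime integers while $p \mid pq$ and $p \nmid m_i$, we get $\gcd(pq,m_i)=1$, so $\Q(\zeta_{pq})$ and $\Q(\zeta_{m_i})$ are linearly disjoint over $\Q$.

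Next I would feed $x$ and $y$ through the $p$-adic Gauss formulas (\ref{EQ7}) and (\ref{EQ6}). In both of them the constant $\gamma_p$ cancels, and one obtains
\[
x = -\log_p(pq) + \sum_{a=1}^{pq-1}\zeta_{pq}^{-a r_1}\log_p(1-\zeta_{pq}^a), \qquad y = -\log_p(m_i) + \sum_{b=1}^{m_i-1}\zeta_{m_i}^{-b r_2}\log_p(1-\zeta_{m_i}^b).
\]
Thus $x$, $y$, and therefore $x-y$, are $\overline{\Q}$-linear combinations of $p$-adic logarithms of nonzero algebraic numbers — the rational integers $pq$ and $m_i$, together with the cyclotomic numbers $1-\zeta_{pq}^a$ and $1-\zeta_{m_i}^b$ — all of which lie in the domain of $\log_p$.

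Then I would invoke the $p$-adic analogue of Baker's theorem, which shows that any such combination is either $0$ or transcendental: passing to a maximal $\Q$-linearly independent subfamily of the logarithms occurring in $x-y$, Baker's $p$-adic theory makes that subfamily together with $1$ linearly independent over $\overline{\Q}$, so an algebraic value of the combination is forced to be $0$. As $x-y$ is algebraic by hypothesis, $x-y=0$, i.e. $x=y$, contradicting $x \neq y$; hence at most one element of $S_3 \cup S_4$ is algebraic, which is the assertion. (Applying the same reasoning to $x$ and to $y$ separately shows each is $0$ or transcendental, and the coprimality $\gcd(pq,m_i)=1$, which separates the two logarithmic sums, forces both to equal $0$ in the exceptional case.) The argument carries no serious obstacle; the two points needing care are invoking $p$-adic Baker in its inhomogeneous form, with the constant $1$ included among the $\overline{\Q}$-linearly independent quantities, and checking that $\gamma_p$ genuinely disappears from $x$ and $y$ so that no transcendental constant is left over.
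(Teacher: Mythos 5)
Your proposal is correct and follows essentially the same route as the paper: reduce to the mixed case via Theorems \ref{TH6} and \ref{TH3}, expand the difference through the $p$-adic Gauss formulas (\ref{EQ7}) and (\ref{EQ6}) so that $\gamma_p$ cancels, and apply the $p$-adic Baker--Kaufman result (Theorem \ref{TH5}) to a maximal linearly independent set of the logarithms, with distinctness ruling out the identically-zero case. The only point you gloss (as does the paper's own terse final step) is the hypothesis $\vert \alpha_i - 1\vert_p < p^{-c}$, which requires first normalizing the non-unit factors such as $1-\zeta_p^c$ into cyclotomic units and then raising to a suitable power, exactly as in the proof of Lemma \ref{L3}.
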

	
	So far we have discussed the case where the composite numbers $q \not\equiv 2(\bmod ~4)$. We now investigate the scenario where $q \equiv 2(\bmod~ 4)$, employing Proposition \ref{P5} put forth by Chatterjee and Dhillon in \cite{CD3}. The theorem is stated as follows: 
	\begin{theorem}\label{TH7}
		Let $p$ be any prime and $q$ be an element of $\mathcal{H}$, where elements of $\mathcal{H}$ satisfy conditions of Proposition \ref{P5}. Then, we have the following statements:
		\begin{enumerate}
			\item If $p\mid q$, then the set of elements $$S_5=\{\psi_p(r/q) + \gamma_p : 1 \leq r < q,  (r,q) = 1 \}$$ are transcendental with at most one exception.
			\item If $p \nmid q$, then the set of elements $$S_6=\Bigg\{\frac{p^{\mu}}{p^{\mu}-1}H^{\prime}_{\mu}(r/q) +\gamma_p : 1 \leq r < q, (r,q) = 1 \Bigg\}$$ are transcendental with at most one exception.
		\end{enumerate}
	\end{theorem}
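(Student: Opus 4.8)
\noindent\textbf{Sketch of proof of Theorem \ref{TH7}.} The plan is to bring both parts to a common form via the $p$-adic Gauss theorem and then to reduce a hypothetical pair of algebraic values to a multiplicative relation among cyclotomic numbers that is forbidden by Proposition \ref{P5}. Fix $r$ with $1\le r<q$ and $(r,q)=1$, and write $\zeta=\zeta_q$. When $p\mid q$ (part (1)) we are in the case $\nu_p(r/q)<0$, so Eq. (\ref{EQ7}) gives
\begin{align*}
\psi_p(r/q)+\gamma_p=-\log q+\sum_{a=1}^{q-1}\zeta^{-ar}\log_p(1-\zeta^a),
\end{align*}
and when $p\nmid q$ (part (2)) we are in the case $\nu_p(r/q)\ge 0$, so Eq. (\ref{EQ6}), with $\mu$ as there (here $p^{\mu}\equiv 1\pmod q$), gives
\begin{align*}
\frac{p^{\mu}}{p^{\mu}-1}H^{\prime}_{\mu}(r/q)+\gamma_p=-\log q+\sum_{a=1}^{q-1}\zeta^{-ar}\log_p(1-\zeta^a).
\end{align*}
Thus in either case the quantity in question equals $v_r:=-\log q+\sum_{a=1}^{q-1}\zeta^{-ar}\log_p(1-\zeta^a)$, and it suffices to prove that at most one of the numbers $v_r$ ($1\le r<q$, $(r,q)=1$) is algebraic.

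Since $q\equiv 2\pmod 4$, write $q=2m$ with $m$ odd; every $r$ coprime to $q$ is then odd. Pairing the index $a$ with $q-a$ via Eq. (\ref{EQ1}), and observing that the self-paired index $a=q/2$ contributes $\log_p(1-\zeta^{q/2})=\log_p 2$ with the $r$-independent coefficient $\zeta^{-(q/2)r}=(-1)^r=-1$, one rewrites
\begin{align*}
v_r=-\log q-\log_p 2+\sum_{a=1}^{q/2-1}\bigl(\zeta^{ar}+\zeta^{-ar}\bigr)\log_p(1-\zeta^a).
\end{align*}
In particular $v_r=v_{q-r}$, so the assertion reduces to: $v_r$ and $v_{r'}$ are not both algebraic when $r'\not\equiv\pm r\pmod q$. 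Assume otherwise. Subtracting kills the $r$-independent terms and leaves
\begin{align*}
v_r-v_{r'}=\sum_{a=1}^{q/2-1}c_a\log_p(1-\zeta^a)\in\overline{\Q},\qquad c_a=\bigl(\zeta^{ar}+\zeta^{-ar}\bigr)-\bigl(\zeta^{ar'}+\zeta^{-ar'}\bigr),
\end{align*}
with the algebraic coefficients $c_a$ not all zero: already $c_1\ne 0$, for $c_1=0$ would give $\cos(2\pi r/q)=\cos(2\pi r'/q)$, hence $r'\equiv\pm r\pmod q$. As each $1-\zeta^a$ is a nonzero algebraic number with $p$-adic logarithm defined, the $p$-adic analogue of Baker's theorem (a nonzero $\overline{\Q}$-linear combination of $\Q$-linearly independent $p$-adic logarithms of nonzero algebraic numbers is transcendental), applied to the terms with $c_a\ne 0$, forces a nontrivial $\Q$-linear relation among the $\log_p(1-\zeta^a)$, equivalently a nontrivial multiplicative relation: $\prod_{a}(1-\zeta^a)^{n_a}$ with $n_a\in\Z$ not all zero is a power of $p$ times a root of unity of order prime to $p$. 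Now the cyclotomic numbers $1-\zeta^a$ do satisfy relations of this shape in general — those coming from cyclotomic units and from the conductor reductions $1-\zeta^a=1-\zeta_{q/\gcd(a,q)}^{\,a/\gcd(a,q)}$, together with the norm relation $\prod_{a=1}^{q/2-1}(1-\zeta^a)(1-\zeta^{-a})=m$ — and the content of Proposition \ref{P5} of Chatterjee and Dhillon is precisely that for $q\in\mathcal{H}$ these are all of them. Combining that description with the Fourier shape $c_a=2\cos(2\pi ar/q)-2\cos(2\pi ar'/q)$ of the coefficients, one checks that no such relation can have coefficient vector $(c_a)$ unless $r'\equiv\pm r\pmod q$, a contradiction. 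Hence at most one of the $v_r$ is algebraic — "one" rather than "none" being possible only because $v_r=v_{q-r}$ — and the case of three or more algebraic values is already excluded, since any two of them produce such a relation.

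The step I expect to be the crux is the reconciliation in the previous paragraph: showing that the coefficient vector $(c_a)_a$ produced by a difference of two algebraic values is not a $\Q$-linear combination of the admissible relation vectors allowed by Proposition \ref{P5} (the cyclotomic-unit relations, the conductor reductions including $1-\zeta^{q/2}=2$, and the norm relation), which is exactly where the hypotheses defining $\mathcal{H}$ — tailored to $q\equiv 2\pmod 4$ in \cite{CD3} — are used. One should also record that $\mathcal{H}$ is nonempty and consistent with $q\equiv 2\pmod 4$, so that the statement is not vacuous; this is immediate from the construction of $\mathcal{H}$ in \cite{CD3}.
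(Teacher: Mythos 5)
Your opening moves (both cases collapse to the same expression via Eqs.~(\ref{EQ7}) and (\ref{EQ6}), then subtract two putatively algebraic values so that $-\log_p q$ cancels) agree with the paper, but the argument breaks down exactly at the step you yourself flag as the crux, and the tool you propose for it does not say what you claim. Proposition~\ref{P5} asserts only that the particular system $\{(1-\zeta_q^h)/(1-\zeta_q):\ (h,q)=1,\ 1<h<q/2\}$ is multiplicatively independent for $q\in\mathcal{H}$; it is \emph{not} a classification of all multiplicative relations among the numbers $1-\zeta_q^a$ for $1\le a\le q-1$. Those numbers include non-units of every conductor dividing $q$ (for instance $1-\zeta_q^{q/2}=2$, and the grouped products over indices of fixed gcd give the prime factors of $q$), so the logarithms $\log_p(1-\zeta_q^a)$, $1\le a\le q/2-1$, are genuinely $\Q$-linearly dependent, and your plan --- apply a Baker/Brumer-type statement to the raw logs, deduce a nontrivial $\Q$-relation, and then rule it out --- requires an explicit description of the full relation lattice. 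You neither derive that lattice nor can you extract it from Proposition~\ref{P5}, so the sentence ``one checks that no such relation can have coefficient vector $(c_a)$ unless $r'\equiv\pm r$'' is an unsubstantiated assertion, and it is precisely the content that a proof must supply. A secondary point: the linear-independence form of the $p$-adic Baker theorem you invoke is not the statement available in the paper; Theorem~\ref{TH5} takes \emph{multiplicative independence} of the $\alpha_i$ (plus the Kaufman closeness condition, arranged by raising to a suitable power) as hypothesis, which is why the whole setup of the paper is engineered to land on a multiplicatively independent system.

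The paper's proof sidesteps the issue you ran into by normalizing to quotients rather than keeping $\log_p(1-\zeta_q^a)$: the difference of two algebraic elements of $S_5$ (or $S_6$) is rewritten, using Eq.~(\ref{EQ1}) and the regrouping already carried out in Theorems~\ref{TH6} and \ref{TH3}, as an \emph{algebraic}-coefficient linear form in the numbers $\log_p\bigl((1-\zeta_q^a)/(1-\zeta_q)\bigr)$ with $(a,q)=1$, $1<a<q/2$, the coefficients $\zeta_q^{-ar_1}+\zeta_q^{ar_1}-\zeta_q^{-ar_2}-\zeta_q^{ar_2}$ being not all zero for distinct elements; then Proposition~\ref{P5} supplies exactly the multiplicative independence needed to apply Theorem~\ref{TH5}, and transcendence of the difference gives the contradiction --- no classification of relations is ever needed, because algebraic (not merely rational) coefficients are permitted. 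If you want to salvage your sketch, you should redo the bookkeeping so that the indices $a$ with $\gcd(a,q)>1$ are absorbed into lower-conductor terms and the remaining coprime part is expressed through the quotient system of Proposition~\ref{P5}, and then quote Theorem~\ref{TH5}; as written, your proposal has a genuine gap at its decisive step.
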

	
	\section{Notations and Preliminaries}
	
	This section addresses all of the necessary notations and known results that are required to prove the results. To begin with, $\mathbb{Q}$ and $\overline{\mathbb{Q}}$ denote the field of rational numbers and the field of algebraic numbers, respectively. 
	For prime $p$, $\mathbb{Q}_p$ denotes the $p$-adic completion of $\mathbb{Q}$, $\mathbb{C}_p$ denotes the completion of algebraic closure of $\mathbb{Q}_p$, and fixing an embedding of $\overline{\mathbb{Q}}$ into $\mathbb{C}_p$, the elements of $\mathbb{C}_p \setminus \overline{\mathbb{Q}}$ are transcendental numbers. Also, $\nu_p$ denotes the $p$-adic valuation in $\mathbb{C}_p$ with $\nu_p(p) = 1$ and $\mid \cdot \mid_p$ denotes the p-adic norm with $\mid p \mid_p = 1/p$.
	
	Further, Diamond in \cite{JD} gave the following theorem relating the $p$-adic analogues which we shall need to prove our result:
	\begin{theorem}{\textbf{(Diamond)}} \label{TH4}
		If $q > 1$ and $\zeta_q$ is a primitive $q$-th root of unity, then
		\begin{align*}
		q\gamma_p(r,q) = \gamma_p - \displaystyle\sum_{a=1}^{q-1}{\zeta^{-ar}_q \log_p(1-\zeta_q^a)}.
		\end{align*}
	\end{theorem}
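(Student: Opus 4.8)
The plan is to derive Diamond's identity from the definitions recorded above together with the two $p$-adic Gauss formulas Eq.~(\ref{EQ7}) and Eq.~(\ref{EQ6}), splitting according to the sign of $\nu_p(r/q)$ --- which is exactly the dichotomy governing which of those formulas is available. The one structural fact I would use throughout is that the Iwasawa extension $\log_p:\mathbb{C}_p^{\times}\to\mathbb{C}_p$ is a group homomorphism, so that $\log_p(r/q+n)=\log_p(r+nq)-\log_p q$ for every $n$; this is the bridge between the ``digamma side'' of the claimed equality and its ``$\gamma_p(r,q)$ side''. Throughout I may take $1\le r<q$, since both sides are unchanged under $r\mapsto r+q$.

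\emph{The case $\nu_p(r/q)<0$.} I would first unfold $\psi_p(r/q)=\lim_{k\to\infty}\frac{1}{p^k}\sum_{n=0}^{p^k-1}\log_p(r/q+n)$ and use the homomorphism property to rewrite it as $-\log_p q+\lim_{k\to\infty}\frac{1}{p^k}\sum_{n=0}^{p^k-1}\log_p(r+nq)$. Since $1\le r<q$, the integers $r+nq$ with $0\le n<p^k$ are precisely the $m$ with $0\le m<qp^k$ and $m\equiv r\pmod q$, so the surviving limit equals $-q\,\gamma_p(r,q)$ by the first displayed definition of $\gamma_p(r,q)$ --- which applies here because $\nu_p(r/q)<0$. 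Thus $\psi_p(r/q)=-\log_p q-q\,\gamma_p(r,q)$, and substituting the value of $\psi_p(r/q)$ supplied by Eq.~(\ref{EQ7}) and cancelling $\log_p q$ would give $q\,\gamma_p(r,q)=\gamma_p-\sum_{a=1}^{q-1}\zeta_q^{-ar}\log_p(1-\zeta_q^a)$.

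\emph{The case $\nu_p(r/q)\ge 0$} (this includes the basic situation $p\nmid q$). Writing $q=p^{k_0}q_1$ with $(p,q_1)=1$ and fixing $\mu$ with $p^{\mu}\equiv1\pmod{q_1}$, say $\mu=\varphi(q_1)$, the second displayed definition presents $\gamma_p(r,q)$ as a $\tfrac{p^{\mu}}{p^{\mu}-1}$-weighted sum of the values $\gamma_p(r+nq,\,p^{\mu}q)$ over $n\in N(r,q)$, and each of these reduces to the previous case since $N(r,q)$ is chosen so that $\nu_p\big((r+nq)/(p^{\mu}q)\big)<0$. On the analytic side I would differentiate $f_N(x)=x\log x-x$ termwise to obtain $H'_{\mu}(x)=\lim_{k\to\infty}\frac{1}{p^k}\sum_{0\le n<p^k,\ \nu_p(x+n)<\mu}\log_p(x+n)$, expand $\tfrac{p^{\mu}}{p^{\mu}-1}H'_{\mu}(r/q)$ accordingly, reduce it via the homomorphism property to a combination of $p$-adic logarithms of integers, and match it term by term with the expression coming from the first case; then Eq.~(\ref{EQ6}) would replace $\tfrac{p^{\mu}}{p^{\mu}-1}H'_{\mu}(r/q)$ by $-\log q-\gamma_p+\sum_{a=1}^{q-1}\zeta_q^{-ar}\log_p(1-\zeta_q^a)$ and the identity would follow exactly as before. (Relation Eq.~(\ref{EQ1}) is available to symmetrize the summands if convenient.)

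The hard part will be the second case: one has to check carefully that the recursive definition of $\gamma_p(r,q)$ really does reassemble into $\tfrac{p^{\mu}}{p^{\mu}-1}H'_{\mu}(r/q)$ --- a distribution-type relation among the partial sums $\frac{1}{p^k}\sum\log_p(\cdot)$ --- and to justify every interchange of $\lim_{k\to\infty}$ with the finite sums over $n\in N(r,q)$ in the $p$-adic metric, where absolute convergence is not automatic. The same delicacy is what makes the tempting shortcut awkward --- namely inserting $\mathbf 1[m\equiv r\pmod q]=\tfrac1q\sum_{b=0}^{q-1}\zeta_q^{b(m-r)}$ directly into the definition of $\gamma_p(r,q)$ and summing over $b$ --- because the isolated $b=0$ term $\frac{1}{p^k}\sum_{0\le m<qp^k}\log_p m$ need not converge even though the full combination does. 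Once that bookkeeping is settled, the remainder is the one-line manipulation of the first case.
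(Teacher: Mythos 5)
There is no proof of this statement in the paper for you to be measured against: Theorem \ref{TH4} is quoted verbatim from Diamond \cite{JD} as a known result. Judged on its own terms, the main defect of your argument is circularity. In the case $\nu_p(r/q)<0$ your manipulation is correct --- the homomorphism property of $\log_p$ and the re-indexing $m=r+nq$ do give $\psi_p(r/q)=-\log_p q-q\,\gamma_p(r,q)$ --- but after that, all the content of the theorem is imported wholesale from Eq.~(\ref{EQ7}). That Gauss-type formula is not an independent or more elementary input: in the sources (Diamond \cite{JD}, Murty--Saradha \cite{MS2}) it is \emph{obtained from} the identity you are trying to prove, by exactly the same one-line rewriting. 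So your first case establishes only that Theorem \ref{TH4} and Eq.~(\ref{EQ7}) are equivalent modulo the definitions, not the theorem itself. The analytic core --- evaluating $\lim_{k}p^{-k}\sum_{n<p^k}\log_p(r+nq)$ against the additive characters of $\mathbb{Z}/q\mathbb{Z}$, i.e.\ precisely the convergence obstruction you yourself point out when you discard the orthogonality shortcut because the $b=0$ term $p^{-k}\sum_{m<qp^k}\log_p m$ need not converge --- is never carried out; Diamond's actual proof consists of handling that step through the locally analytic functions $H_N$ and their distribution relations, and no appeal to a pre-established Gauss formula can replace it.

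The case $\nu_p(r/q)\ge 0$ has the same circularity (now with Eq.~(\ref{EQ6})) compounded by a gap you concede explicitly: the identification of $\frac{p^{\mu}}{p^{\mu}-1}H^{\prime}_{\mu}(r/q)$ with $-\log_p q-q\,\gamma_p(r,q)$, equivalently the reassembly of the recursively defined $\gamma_p(r,q)$ from the constants $\gamma_p(r+nq,\,p^{\varphi(q_1)}q)$ over $n\in N(r,q)$ together with the collapse of the character sums in $\zeta_{p^{\mu}q}$ down to sums in $\zeta_q$, is exactly the distribution-relation bookkeeping where the real work lies, and it is left unverified. As written, the proposal is therefore a reduction of Diamond's theorem to Eqs.~(\ref{EQ7})--(\ref{EQ6}) plus unproved distribution identities, rather than a proof; to make it a proof you would either have to prove Eq.~(\ref{EQ7})/(\ref{EQ6}) independently of Theorem \ref{TH4} or argue directly from the definition of $\gamma_p(r,q)$ as Diamond does.
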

	
	Next, we look at R. Kaufman's $p$-adic counterpart, much as in the classical case, where Baker's theorem was critical in establishing statements about classical logarithms of complex numbers.
	
	\begin{theorem}{\textbf{(R. Kaufman)}}
		Let $\alpha_1,\alpha_2,\ldots,\alpha_m$ be fixed algebraic numbers that are multiplicatively independent over $\mathbb{Q}$ with height at most $h$. Let $\beta_0,\beta_1,\ldots,\beta_m$ be arbitrary algebraic numbers with height at  most $H$ (assumed greater than $1$) and $\beta_0 \neq 0$. There exists a constant $c_1 >0$ which depends only on the degree of the number field generated by $\alpha_1,\alpha_2, \ldots ,\alpha_m,\beta_0,\beta_1,\ldots,\beta_m$ such that the following holds. Let $K = \mathbb{Q}(\alpha_1,\alpha_2, \ldots ,\alpha_m,\beta_0,\beta_1,\ldots,\beta_m)$ and $|\alpha_i -1|_p < p^{-c_1}$, for $1 \leq i \leq m$. Then, 
		\begin{align*}
		|\beta_0 + \beta_1\log_p\alpha_1 +\cdots + \beta_m \log_p \alpha_m|_p > p^{-c \log_pH},
		\end{align*} 
		where $c$ is a constant depending only on $p$, $h$, $m$ and $\left[ K: \mathbb{Q}\right]$.
	\end{theorem}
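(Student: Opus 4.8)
The statement is Kaufman's $p$-adic analogue of Baker's theorem on linear forms in logarithms, so I would not attempt a self-contained derivation but rather transport the Gel'fond--Baker transcendence machinery to the non-archimedean setting (this transport is precisely what Kaufman, building on Mahler's $p$-adic function theory, carried out). The plan is a proof by contradiction: assume
\[
\Lambda := \beta_0 + \beta_1 \log_p \alpha_1 + \cdots + \beta_m \log_p \alpha_m
\]
is nonzero yet $|\Lambda|_p \le p^{-c\log_p H}$; one then derives a contradiction once $c$ is chosen suitably large in terms of $p$, $h$, $m$ and $[K:\mathbb{Q}]$, which is the assertion. The hypothesis $|\alpha_i - 1|_p < p^{-c_1}$ places each $\log_p\alpha_i$ comfortably inside the disc of convergence of the $p$-adic exponential, so the maps $z \mapsto \alpha_i^z := \exp_p(z \log_p \alpha_i)$ are $p$-adic entire with good growth control and satisfy $\alpha_i = \exp_p(\log_p \alpha_i)$; this is the place where $c_1$ enters.

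First I would carry out the auxiliary construction. By Siegel's lemma (the pigeonhole estimate for integer solutions of an underdetermined linear system over $K$) one produces not-all-zero algebraic integers $c(\lambda_0, \lambda_1, \ldots, \lambda_m)$ of controlled height so that
\[
\Phi(z) = \sum_{\lambda_0 = 0}^{L_0} \sum_{\lambda_1 = 0}^{L_1} \cdots \sum_{\lambda_m = 0}^{L_m} c(\lambda_0, \ldots, \lambda_m)\, z^{\lambda_0}\, \alpha_1^{\lambda_1 z} \cdots \alpha_m^{\lambda_m z}
\]
vanishes to high order at the integers $z = 1, 2, \ldots, S$; using $\alpha_i = \exp_p(\log_p\alpha_i)$ together with $\Lambda \approx 0$ one checks that the derivatives $\Phi^{(t)}(s)$ are algebraic numbers of bounded height, and the parameters $L_0, \ldots, L_m, S$ are balanced so that the Siegel system is solvable. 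Next comes the extrapolation step: a non-archimedean Schwarz-type lemma shows that a $p$-adic analytic function of moderate growth that vanishes to high order at many points must be $p$-adically very small at further points and derivatives, and iterating this bootstraps the vanishing of $\Phi$ to a much larger set. Eventually one forces some particular value $\Phi^{(t)}(s)$, which is a \emph{nonzero} algebraic number of explicitly bounded height, to satisfy a $p$-adic estimate violating the Liouville inequality coming from the product formula. The non-vanishing used here is exactly where multiplicative independence of $\alpha_1, \ldots, \alpha_m$ over $\mathbb{Q}$ is indispensable: it supplies the rank/determinant input guaranteeing that $\Phi$ is not identically zero and that the crucial value does not vanish.

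The main obstacle is the bookkeeping in the extrapolation: one must track how $\nu_p(\Phi^{(t)}(s))$ grows as the zero set is enlarged, control denominators and heights of every algebraic number that appears, and choose $L_0, \ldots, L_m, S$ so that solvability of the Siegel system, validity of the Schwarz bound, and violation of the Liouville bound hold simultaneously while the dependence on $H$ stays logarithmic. In the $p$-adic case the Schwarz lemma and the growth bounds for $\exp_p$ genuinely require $|\alpha_i - 1|_p < p^{-c_1}$, and pinning down $c_1$ so that it depends only on $[K:\mathbb{Q}]$ is the delicate point; for the applications in this paper the theorem is simply used as a black box.
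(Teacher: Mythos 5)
The paper never proves this statement: it is quoted verbatim as a known theorem of R.~Kaufman in the preliminaries and is used purely as a black box (only its corollary, Theorem \ref{TH5} of Murty--Saradha, is actually applied later). So there is no internal proof to compare your attempt against, and your closing remark that the theorem is used as a black box is exactly the paper's stance.

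Judged as a proof on its own terms, however, your text is a strategy outline rather than a proof, and the gaps sit precisely at the points that carry the content of the theorem. The auxiliary-function and Siegel's-lemma step is plausible as described, but the two decisive ingredients are only asserted: (i) the non-vanishing of the extrapolated value $\Phi^{(t)}(s)$ --- in Baker's method this is the hardest part, requiring either a delicate inductive zero-estimate or a Vandermonde/Kummer-type descent, and it cannot be dispatched by saying multiplicative independence ``supplies the rank/determinant input''; multiplicative independence guarantees the exponentials $\alpha_1^{z},\ldots,\alpha_m^{z}$ are algebraically suitable, but the non-vanishing of a \emph{specific} derivative value after extrapolation is a separate argument; (ii) the quantitative conclusion $|\Lambda|_p > p^{-c\log_p H}$, i.e.\ the fact that the dependence on the height $H$ of the $\beta_j$'s is only logarithmic, which must emerge from an explicit balancing of $L_0,\ldots,L_m,S$ against $\log H$ --- you acknowledge this bookkeeping but never indicate the parameter choices that make all three constraints (solvability of the Siegel system, validity of the $p$-adic Schwarz bound, violation of the Liouville inequality) compatible. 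In addition, the hypothesis $\beta_0\neq 0$ (the inhomogeneous term) plays no role anywhere in your sketch, whereas the inhomogeneous case is genuinely different from the homogeneous one in Baker-type arguments and is part of what is being claimed. So the proposal correctly identifies the Gel'fond--Baker machinery that Kaufman's proof follows, but as a standalone derivation it is incomplete; citing Kaufman's paper, as this article does, is the appropriate resolution.
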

	
	Murty and Saradha in \cite{MS2} presented the following conclusion as a result of this theorem:
	\begin{theorem} \label{TH5}
		Suppose that $\alpha_1,\alpha_2,\ldots,\alpha_m$ are non-zero algebraic numbers that are multiplicatively independent over $\mathbb{Q}$ and $\beta_1,\beta_2,\ldots,\beta_m$ are arbitrary algebraic numbers (not all zero). Further, suppose that
		\begin{align*}
		\mid \alpha_i -1 \mid_p < p^{-c},~~~ \text{for}~~~ 1 \leq i \leq m,
		\end{align*}
		where $c$ is a constant which depends only on the degree of the number field generated by $\alpha_1,\alpha_2, \ldots ,\alpha_m,\beta_1,\beta_2,\ldots,\beta_m$. Then,
		\begin{align*}
		\beta_1\log_p\alpha_1 +\cdots + \beta_m \log_p \alpha_m
		\end{align*} 
		is transcendental.
	\end{theorem}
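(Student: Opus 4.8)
The plan is to derive Theorem~\ref{TH5} directly from Kaufman's $p$-adic analogue of Baker's theorem, stated above, by a proof by contradiction. The point is that the hypothesis $|\alpha_i-1|_p<p^{-c}$ has been built in precisely so that the constant $c_1$ appearing in Kaufman's theorem may be taken to be $c$; consequently Kaufman's lower bound $|\beta_0+\beta_1\log_p\alpha_1+\cdots+\beta_m\log_p\alpha_m|_p>p^{-c\log_pH}$ is available for every algebraic $\beta_0\neq0$ and all algebraic $\beta_1,\dots,\beta_m$, the constant depending only on $p$ and the number field in play.

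Write $\Lambda:=\beta_1\log_p\alpha_1+\cdots+\beta_m\log_p\alpha_m$. First I would record that $\log_p\alpha_1,\dots,\log_p\alpha_m$ are linearly independent over $\mathbb{Q}$: since $\alpha_1,\dots,\alpha_m$ are multiplicatively independent, no nontrivial monomial $\prod_{i=1}^m\alpha_i^{n_i}$ is a root of unity, and since each $\alpha_i$ lies in $U_1$ — on which the $p$-adic logarithm vanishes only at roots of unity, and which is closed under multiplication and inverses — one has $\sum_i n_i\log_p\alpha_i=\log_p\!\big(\prod_i\alpha_i^{n_i}\big)\neq0$ unless all $n_i=0$. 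By the standard argument, Kaufman's estimate then upgrades this to linear independence of $1,\log_p\alpha_1,\dots,\log_p\alpha_m$ over $\overline{\mathbb{Q}}$: a putative relation with nonzero constant term is contradicted outright by the strictly positive lower bound, while a relation with zero constant term is exactly the homogeneous $p$-adic Baker statement, which follows from the inhomogeneous one once multiplicative independence is used to exclude rational coefficient vectors.

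Granting this, suppose for contradiction that $\Lambda$ is algebraic, say $\Lambda=\gamma\in\overline{\mathbb{Q}}$. Then $\gamma\cdot1-\beta_1\log_p\alpha_1-\cdots-\beta_m\log_p\alpha_m=0$ is a $\overline{\mathbb{Q}}$-linear relation among $1,\log_p\alpha_1,\dots,\log_p\alpha_m$ that is nontrivial (its coefficients are $\gamma,-\beta_1,\dots,-\beta_m$, and the $\beta_i$ are not all zero), contradicting the independence just established; hence $\Lambda$ is transcendental. Concretely, when $\gamma\neq0$ one simply invokes Kaufman's theorem with $\beta_0:=-\gamma$ and with $H>1$ chosen to exceed the heights of $\gamma,\beta_1,\dots,\beta_m$, obtaining $|-\gamma+\Lambda|_p>p^{-c\log_pH}>0$, whereas $-\gamma+\Lambda=0$.

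The delicate point — and the one I expect to be the main obstacle — is the uniformity of the constant $c$: in the last step one is forced to adjoin $\beta_0=-\gamma=-\Lambda$, whose degree over $\mathbb{Q}$ is not a priori controlled by the data $\alpha_i,\beta_i$. This is resolved by observing that the $\alpha_i$ and $\beta_i$ are fixed once and for all, so $\Lambda$ is itself a fixed number; were it algebraic, the field $\mathbb{Q}(\alpha_1,\dots,\alpha_m,\beta_1,\dots,\beta_m,\Lambda)$ would be a fixed, if unknown, number field, and $c$ in the statement is to be chosen at least as large as Kaufman's constant for that field. No analytic work beyond Kaufman's theorem is needed; the whole proof is this contradiction, together with the bookkeeping of the constant and the elementary reduction showing $\Lambda\neq0$.
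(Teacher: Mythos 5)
The paper itself does not prove Theorem \ref{TH5}: it quotes it from Murty and Saradha as a consequence of Kaufman's estimate, and your overall strategy (assume the form is algebraic and feed its negative into Kaufman's inequality as $\beta_0$) is the intended derivation for the main case. The genuine gap is your treatment of the vanishing case. Since the theorem asserts transcendence, you must in particular rule out $\Lambda=\beta_1\log_p\alpha_1+\cdots+\beta_m\log_p\alpha_m=0$, and your claim that this homogeneous case ``follows from the inhomogeneous one once multiplicative independence is used to exclude rational coefficient vectors'' is not a valid deduction. Kaufman's theorem requires $\beta_0\neq 0$, and its lower bound $p^{-c\log_p H}$ deteriorates with the height of $\beta_0$, so no choice of an auxiliary nonzero $\beta_0$ extracts a contradiction from $\Lambda=0$: in that case the inequality only asserts $|\beta_0|_p>p^{-c\log_p H}$, a Liouville-type bound that every algebraic $\beta_0$ of bounded height and degree satisfies anyway. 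Multiplicative independence of the $\alpha_i$ (together with $\alpha_i\in U_1$, as you correctly argue) gives linear independence of $\log_p\alpha_1,\dots,\log_p\alpha_m$ over $\mathbb{Q}$ only; upgrading this to linear independence over $\overline{\mathbb{Q}}$ in the homogeneous setting is precisely the $p$-adic analogue of Baker's homogeneous theorem (Brumer's theorem), a separate deep input that must be invoked or proved, not a formal consequence of the inhomogeneous estimate. Your nonvanishing step therefore needs an additional citation or argument.

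A secondary issue is the constant. You rightly flag that applying Kaufman with $\beta_0=-\Lambda$ requires the constant attached to the field $\mathbb{Q}(\alpha_1,\dots,\alpha_m,\beta_1,\dots,\beta_m,\Lambda)$, whose degree is not controlled by the original data; but your fix, choosing $c$ at least as large as Kaufman's constant for that field, changes the meaning of $c$, which in the statement depends only on the degree of $\mathbb{Q}(\alpha_1,\dots,\alpha_m,\beta_1,\dots,\beta_m)$. As written, your argument proves a weaker assertion in which the admissible $c$ depends on the putative algebraic value of $\Lambda$ itself. To obtain the theorem as stated one needs either a version of the $p$-adic lower bound whose applicability condition does not involve the degree of $\beta_0$, or some way to bound the degree of $\Lambda$; this should be addressed explicitly rather than absorbed into the choice of $c$.
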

	
	In addition, Chatterjee and Gun \cite{TCS} made the following important claim on the multiplicative independence of cyclotomic numbers, which played a vital role in their theorems:
	\begin{proposition} \label{P1}
		For $p_i \in \mathcal{P}$, let $q_i = p_i^{m_i}$, where $m_i \in \mathbb{N}$ and $\zeta_{q_i}$ be a primitive $q_i$-th root of unity. Then, for any finite subset $\mathcal{K}$ of $\mathcal{P}$, the numbers
		\begin{align*}
		1-\zeta_{q_i}, \frac{1-\zeta_{q_i}^{a_i}}{1-\zeta_{q_i}},~~~ \text{where}~~~ 1<a_i<\frac{q_i}{2}, (a_i, q_i)=1,~~~ \text{and}~~~ p_i \in \mathcal{K}
		\end{align*}
		are multiplicatively independent.
	\end{proposition}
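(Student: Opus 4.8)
The plan is to prove the multiplicative independence in three stages: first eliminate the exponents attached to the ``ramified'' elements $1-\zeta_{q_i}$ by inspecting $p$-adic valuations, then reduce the surviving relation among cyclotomic units to a single prime by taking norms, and finally appeal to the classical structure theory of cyclotomic units in a single prime-power cyclotomic field.

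Concretely, suppose that
\[
\prod_{p_i\in\mathcal K}(1-\zeta_{q_i})^{b_i}\ \prod_{p_i\in\mathcal K}\ \prod_{a_i}\left(\frac{1-\zeta_{q_i}^{a_i}}{1-\zeta_{q_i}}\right)^{c_{i,a_i}}=1
\]
with integer exponents, an identity which we read inside $L=\mathbb Q(\zeta_N)$, where $N=\prod_i q_i$ (the $q_i$ being pairwise coprime prime powers, so that $L=\prod_i\mathbb Q(\zeta_{q_i})$). Each $\tfrac{1-\zeta_{q_i}^{a_i}}{1-\zeta_{q_i}}=1+\zeta_{q_i}+\cdots+\zeta_{q_i}^{a_i-1}$ is a genuine unit of $\mathcal O_L$ (its inverse is again a sum of powers of $\zeta_{q_i}$, since $(a_i,q_i)=1$), while $N_{\mathbb Q(\zeta_{q_i})/\mathbb Q}(1-\zeta_{q_i})=\Phi_{q_i}(1)=p_i$, so $1-\zeta_{q_i}$ is a unit at every prime not lying over $p_i$; moreover, for a prime $\mathfrak p\mid p_j$ of $L$ one has $\nu_{\mathfrak p}(1-\zeta_{q_j})=1$, because $L/\mathbb Q(\zeta_{q_j})$ is unramified over $p_j$ and $(1-\zeta_{q_j})^{\varphi(q_j)}=(p_j)$ in $\mathbb Q(\zeta_{q_j})$. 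Applying $\nu_{\mathfrak p}$ to the displayed identity therefore kills every term except $b_j\nu_{\mathfrak p}(1-\zeta_{q_j})=b_j$, so $b_j=0$; letting $j$ range over all indices disposes of every $b_i$.

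It remains to see that the cyclotomic units $\tfrac{1-\zeta_{q_i}^{a_i}}{1-\zeta_{q_i}}$ (over all $i$ and all admissible $a_i$) are multiplicatively independent. Fix $j$ and apply $N_{L/L_j}$ with $L_j=\mathbb Q(\zeta_{q_j})$ to the relation $\prod_i\prod_{a_i}(\cdots)^{c_{i,a_i}}=1$. For $i\ne j$ the factor lies in $\mathbb Q(\zeta_M)$ with $M=\prod_{k\ne j}q_k$, which is linearly disjoint from $L_j$ over $\mathbb Q$ (coprime conductors), so $N_{L/L_j}$ of it equals its norm all the way down to $\mathbb Q$; and since $N_{\mathbb Q(\zeta_{q_i})/\mathbb Q}(1-\zeta_{q_i}^{a})=\Phi_{q_i}(1)=p_i$ for every $a$ prime to $q_i$, that norm is $p_i/p_i=1$. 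The factors with $i=j$ lie in $L_j$, so $N_{L/L_j}$ merely raises their product to the power $[L:L_j]$. Hence $\prod_{a_j}\bigl(\tfrac{1-\zeta_{q_j}^{a_j}}{1-\zeta_{q_j}}\bigr)^{c_{j,a_j}}$ is a root of unity, and we are reduced to the single prime-power statement: for $q=p^{m}$ the numbers $\tfrac{1-\zeta_q^{a}}{1-\zeta_q}$ with $1<a<q/2$, $(a,q)=1$, are multiplicatively independent modulo roots of unity.

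For this last step I would invoke the standard theory of cyclotomic units (as in Washington's book): $\tfrac{1-\zeta_q^{a}}{1-\zeta_q}$ equals a root of unity times the real cyclotomic unit $\xi_a\in\mathbb Q(\zeta_q)^{+}$ (a suitable power-of-$\zeta_q$ multiple of $\tfrac{1-\zeta_q^{a}}{1-\zeta_q}$), and the $\xi_a$ for $1<a<q/2$, $(a,q)=1$, together with $-1$, generate the group $C^{+}$ of cyclotomic units of $\mathbb Q(\zeta_q)^{+}$, which for a prime power $q$ has finite index (the plus class number) in the full unit group $E^{+}$. Since $\operatorname{rank}E^{+}=\varphi(q)/2-1$ equals the number of admissible $a$, and a group of rank $r$ generated by $r$ elements is free on them, the $\xi_a$ --- and hence the $\tfrac{1-\zeta_q^{a}}{1-\zeta_q}$ --- are multiplicatively independent modulo roots of unity (the cases $q\in\{2,3,4\}$ being vacuous). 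Feeding this back through the reduction forces every $c_{i,a_i}=0$, which together with Stage~1 proves the proposition. The only substantive input is this last classical fact that the cyclotomic units have full rank in $E^{+}$ --- equivalently the non-vanishing of $L(1,\chi)$ for the even Dirichlet characters modulo $q$, which is Dirichlet's theorem and can simply be quoted --- so the proof has no real obstacle beyond citing the correct classical result; Stages~1 and~2 are elementary bookkeeping with valuations and norms.
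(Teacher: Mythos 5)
Your proposal is correct, but it does not follow the route the paper takes: Proposition \ref{P1} is in fact quoted from Chatterjee and Gun \cite{TCS} without proof here, and the in-paper argument closest to it is the proof of the more general Proposition \ref{P4}. There, the exponents on the non-unit generators are killed by taking the norm to $\mathbb{Q}$ and using unique factorization of $\prod p_i^{\alpha_iA_i}q_i^{\beta_iB_i}=1$ (your Stage 1 via $\nu_{\mathfrak p}$ at primes over $p_j$ is an equivalent local version of this); the reduction to a single modulus is then done not by relative norms but by splitting the relation across $\mathbb{Q}(\zeta_{m_1})\cap\mathbb{Q}(\zeta_r)=\mathbb{Q}$, deducing both sides are $\pm1$ and squaring, and iterating; and the final one-modulus independence is imported wholesale (Pei--Feng in Proposition \ref{P3} for composite moduli, the analogous prime-power fact in \cite{TCS}). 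Your Stage 2 replaces the intersection-and-squaring trick with $N_{L/L_j}$, using $\mathrm{Gal}(L/L_j)\cong\mathrm{Gal}(\mathbb{Q}(\zeta_M)/\mathbb{Q})$ and $N_{\mathbb{Q}(\zeta_{q_i})/\mathbb{Q}}(1-\zeta_{q_i}^{a})=p_i$ so that the foreign factors norm to $1$; this is clean, avoids the squaring step (at the harmless cost of only concluding the surviving product is a root of unity, which independence modulo torsion absorbs), and arguably scales more transparently than the two-field intersection argument. Your Stage 3 also goes one level deeper than the paper: instead of citing the prime-power independence result as a black box, you derive it from Washington's index theorem $[E^+:C^+]=h^+$ for prime-power conductor together with the rank count $\varphi(q)/2-1$, which matches the number of admissible $a$, so the generators of the finite-index subgroup must be independent modulo $\pm1$; the count of $a$ with $1<a<q/2$, $(a,q)=1$ is indeed $\varphi(q)/2-1$, the $\xi_a$ are real so the only roots of unity to worry about are $\pm1$, and the degenerate moduli $q\in\{2,3,4\}$ are vacuous as you note. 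In short: the argument is sound and complete, and what it buys over the paper's template is a self-contained justification of the one-modulus step and a slightly slicker multi-modulus reduction, at the price of invoking the classical machinery of cyclotomic units rather than a single quotable proposition.
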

	
	Then, using Proposition \ref{P1} along with Theorem \ref{TH5}, they proved the following two important lemmas in \cite{TCS} which were used to prove Theorem \ref{TH1}:
	\begin{lemma} \label{L1}
		Let $\mathcal{K}$ be any finite subset of $\mathcal{P}$. For $q \in \mathcal{K}$ and $1 < a < q/2$, let $s_q$, $t_{q_a}$ be arbitrary algebraic numbers, not all zero. Further, let $t_{q_a}$ be not all zero when $p \in \mathcal{K}$. Then,
		\begin{align*}
		\sum_{q \in \mathcal{K}}{s_q \log_p(1-\zeta_q)}  + \sum_{\substack{q \in \mathcal{K},\\ 1 < a< q/2}}{t_{q_a} \log_p \left(\frac{1-\zeta^a_q}{1-\zeta_q}\right)}
		\end{align*}
		is transcendental.
	\end{lemma}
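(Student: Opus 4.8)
The plan is to exhibit the displayed quantity as a linear combination, with not-all-zero algebraic coefficients, of $p$-adic logarithms of the multiplicatively independent cyclotomic numbers supplied by Proposition \ref{P1}, and then to invoke Theorem \ref{TH5} (Kaufman's $p$-adic analogue of Baker's theorem, in the Murty--Saradha form). Two preliminary normalisations are needed, because Theorem \ref{TH5} wants the bases to be $p$-adic units lying $p$-adically close to $1$.

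First I would remove the base $1-\zeta_p$, which appears only when $p\in\mathcal K$ and which is not a $p$-adic unit since $\nu_p(1-\zeta_p)=1/(p-1)$. From $\prod_{a=1}^{p-1}(1-\zeta_p^a)=p$ and $\log_p p=0$, together with the symmetry $\log_p(1-\zeta_p^{-t})=\log_p(1-\zeta_p^{t})$ of Eq. (\ref{EQ1}), one gets
\[
\log_p(1-\zeta_p)=\frac{-2}{p-1}\sum_{1<a<p/2}\log_p\!\left(\frac{1-\zeta_p^a}{1-\zeta_p}\right).
\]
Substituting this for the term $s_p\log_p(1-\zeta_p)$ eliminates the only non-unit base and merely changes the coefficient of $\log_p\!\left(\frac{1-\zeta_p^a}{1-\zeta_p}\right)$ from $t_{p_a}$ to $t_{p_a}-\frac{2s_p}{p-1}$. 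After this substitution (vacuous when $p\notin\mathcal K$) the quantity in the lemma takes the form $\sum_i\beta_i\log_p\alpha_i$, where the $\alpha_i$ run through $1-\zeta_q$ for $q\in\mathcal K\setminus\{p\}$ and $\frac{1-\zeta_q^a}{1-\zeta_q}$ for $q\in\mathcal K$, $1<a<q/2$ --- all $p$-adic units --- and the $\beta_i$ are algebraic. By Proposition \ref{P1} these $\alpha_i$ are multiplicatively independent over $\mathbb Q$; and a short case analysis, using that the $s_q,t_{q_a}$ are not all zero together with the extra hypothesis on the $t_{q_a}$ when $p\in\mathcal K$, shows that the $\beta_i$ are not all zero.

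Next I would arrange the $p$-adic proximity demanded by Theorem \ref{TH5}. Each $\alpha_i$ being a unit, a suitable fixed exponent $M_0$ (a common multiple of $p-1$ and of the residue-field orders $p^{f_q}-1$, with $f_q$ the order of $p$ modulo $q$) makes $|\alpha_i^{M_0}-1|_p<1$, whence $|\alpha_i^{M_0p^N}-1|_p\to 0$ as $N\to\infty$. Since $\mathbb Q(\{\alpha_i^{M}\},\{\beta_i\})\subseteq\mathbb Q(\{\alpha_i\},\{\beta_i\})$, the constant $c$ of Theorem \ref{TH5} may be fixed first (it depends only on the degree of this last field), and then $M=M_0p^N$ chosen with $N$ large enough that $|\alpha_i^{M}-1|_p<p^{-c}$ for all $i$; so there is no circularity. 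The $\alpha_i^{M}$ stay multiplicatively independent, and $\log_p(\alpha_i^{M})=M\log_p\alpha_i$, so the quantity in the lemma equals $\frac1M\sum_i\beta_i\log_p(\alpha_i^{M})$.

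By the two previous steps, $\sum_i\beta_i\log_p(\alpha_i^{M})$ is a linear combination with not-all-zero algebraic coefficients of $p$-adic logarithms of multiplicatively independent algebraic numbers each within $p^{-c}$ of $1$, so Theorem \ref{TH5} makes it transcendental; dividing by the nonzero integer $M$ keeps it transcendental, which is the assertion of the lemma. The step I expect to need the most care is the elimination of the non-unit $\log_p(1-\zeta_p)$ in the second paragraph: one must check that the substitution does not wipe out all the coefficients at once, and this is exactly where the extra hypothesis on the $t_{q_a}$ (in force when $p\in\mathcal K$) is used. The remainder is bookkeeping layered on Proposition \ref{P1} and Theorem \ref{TH5}.
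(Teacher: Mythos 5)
Your overall strategy is exactly the one this paper relies on: Lemma \ref{L1} is quoted from \cite{TCS}, and the paper's own proof of its generalization, Lemma \ref{L3}, is precisely your recipe --- replace the cyclotomic numbers by suitable powers $\alpha^{T}$ so that $|\alpha^{T}-1|_p<p^{-K}$ with $K$ large, then apply Theorem \ref{TH5} to the multiplicatively independent system supplied by Proposition \ref{P1} (resp.\ Proposition \ref{P4}). Your choice of the exponent after fixing the constant, and the remark that $\log_p(\alpha^{M})=M\log_p\alpha$ lets you divide back, is a correct and more explicit rendering of that step. Your elimination of the non-unit base via
\begin{align*}
\log_p(1-\zeta_p)=\frac{-2}{p-1}\sum_{1<a<p/2}\log_p\!\left(\frac{1-\zeta_p^a}{1-\zeta_p}\right),
\end{align*}
coming from $\prod_{a=1}^{p-1}(1-\zeta_p^a)=p$, $\log_p p=0$ and Eq.\ (\ref{EQ1}), is also correct, and it makes explicit a point the paper's two-line proof of Lemma \ref{L3} passes over: no power of $1-\zeta_p$ can be brought $p$-adically close to $1$, so this term must be removed before Theorem \ref{TH5} can be invoked.

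The gap is in the step you yourself flagged. After your substitution the coefficient of $\log_p\bigl(\frac{1-\zeta_p^a}{1-\zeta_p}\bigr)$ becomes $t_{p_a}-\frac{2s_p}{p-1}$, and the stated hypotheses (``$s_q,t_{q_a}$ not all zero'' and ``$t_{q_a}$ not all zero when $p\in\mathcal K$'') do not force these reduced coefficients to be nonzero: take $p\geq 5$ in $\mathcal K$, $s_p\neq 0$, $t_{p_a}=\frac{2s_p}{p-1}$ for every $1<a<p/2$, and all remaining coefficients zero. Then the hypotheses hold, every reduced coefficient vanishes, and in fact the original linear form is identically zero by the very relation you used, hence algebraic; so no case analysis, and no appeal to Theorem \ref{TH5}, can close this case. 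What your argument actually needs (and what holds in every application of the lemma made in this paper, where some coefficient attached to a prime $q\neq p$, or a ratio coefficient different from $\frac{2s_p}{p-1}$, is nonzero) is the condition that the coefficient vector does not reduce to zero, equivalently that the given combination is not a scalar multiple of the relation $\frac{p-1}{2}\log_p(1-\zeta_p)+\sum_{1<a<p/2}\log_p\bigl(\frac{1-\zeta_p^a}{1-\zeta_p}\bigr)=0$. State that hypothesis explicitly (or adopt the precise formulation of \cite{TCS}); with it, your proof is complete and coincides with the paper's method.
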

	
	\begin{lemma} \label{L2}
		Let $q_1$, $q_2$ be two distinct prime numbers and $ 1 \leq r_i < q_i$, for $i=1,2$. Then,
		\begin{align*}
		\sum_{b=1}^{q_2 - 1}{\zeta^{-br_2}_{q_2}} \log_p(1-\zeta^b_{q_2}) - \sum_{a=1}^{q_1 - 1}{\zeta^{-ar_1}_{q_1}} \log_p(1-\zeta^a_{q_1})
		\end{align*}
		is transcendental.
	\end{lemma}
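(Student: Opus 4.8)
The plan is to recast the two Gauss‑type sums in the ``cyclotomic basis'' $\{1-\zeta_{q_i},\ (1-\zeta_{q_i}^{a})/(1-\zeta_{q_i})\}$ of Proposition~\ref{P1} and then invoke Lemma~\ref{L1} with $\mathcal{K}=\{q_1,q_2\}$. First I would unfold each sum: for a prime $q$ and $1\le r<q$, every $a$ with $1\le a\le q-1$ satisfies $q\nmid ar$, so $\sum_{a=1}^{q-1}\zeta_q^{-ar}=-1$. Splitting $\log_p(1-\zeta_q^{a})=\log_p(1-\zeta_q)+\log_p\bigl((1-\zeta_q^{a})/(1-\zeta_q)\bigr)$ and discarding the vanishing $a=1$ term of the second piece gives
\[
\sum_{a=1}^{q-1}\zeta_q^{-ar}\log_p(1-\zeta_q^{a})=-\log_p(1-\zeta_q)+\sum_{a=2}^{q-1}\zeta_q^{-ar}\log_p\Bigl(\tfrac{1-\zeta_q^{a}}{1-\zeta_q}\Bigr).
\]
Pairing $a$ with $q-a$ and using $\log_p(1-\zeta_q^{-t})=\log_p(1-\zeta_q^{t})$ from Eq.~(\ref{EQ1}) rewrites the last sum over $1<a<q/2$ with algebraic coefficients $\zeta_q^{-ar}+\zeta_q^{ar}$. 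Carrying this out for $q_1$ and $q_2$, the quantity in the lemma becomes
\[
\log_p(1-\zeta_{q_1})-\log_p(1-\zeta_{q_2})+\!\!\sum_{1<b<q_2/2}\!\!(\zeta_{q_2}^{-br_2}+\zeta_{q_2}^{br_2})\log_p\Bigl(\tfrac{1-\zeta_{q_2}^{b}}{1-\zeta_{q_2}}\Bigr)-\!\!\sum_{1<a<q_1/2}\!\!(\zeta_{q_1}^{-ar_1}+\zeta_{q_1}^{ar_1})\log_p\Bigl(\tfrac{1-\zeta_{q_1}^{a}}{1-\zeta_{q_1}}\Bigr).
\]

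When $p\notin\{q_1,q_2\}$ this is already of the form handled by Lemma~\ref{L1} with $s_{q_1}=1$, $s_{q_2}=-1$ and the $t$'s as above; since $s_{q_2}\ne0$ the coefficients are not all zero and no further hypothesis is required, so the quantity is transcendental.

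The main obstacle is the case $p\mid q_1q_2$, say $q_2=p$ after possibly swapping $q_1,q_2$ (which only flips an overall sign, hence does not affect transcendence). Now $\log_p(1-\zeta_p)$ is not an admissible term and Lemma~\ref{L1} additionally demands, when $p\in\mathcal{K}$, that the $t$'s not be all zero. I would resolve this using $\prod_{a=1}^{p-1}(1-\zeta_p^{a})=p$ together with $\log_p p=0$, which yields
\[
(p-1)\log_p(1-\zeta_p)=\sum_{a=2}^{p-1}\log_p\Bigl(\tfrac{1-\zeta_p^{a}}{1-\zeta_p}\Bigr);
\]
substituting this into the display eliminates $\log_p(1-\zeta_p)$ entirely. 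The result is again of Lemma~\ref{L1} type for $\mathcal{K}=\{q_1,q_2\}$: the coefficient of $\log_p(1-\zeta_{q_1})$ is still $1$ (recall $q_1\ne p$), the new $p$‑side coefficients $\zeta_p^{-ar_2}-\tfrac1{p-1}$ are nonzero because a root of unity cannot equal the rational $\tfrac1{p-1}$, and the $q_1$‑side coefficients are sums of two roots of unity, hence nonzero; thus both hypotheses of Lemma~\ref{L1} hold and transcendence follows. The few remaining degenerate configurations --- one of the primes equal to $2$ (where $1-\zeta_2=2$ and the corresponding sum collapses to $\pm\log_p 2$, which is $0$ when $p=2$ and otherwise equals $\pm\log_p(1-\zeta_2)$, an admissible term since $2$ occurs in the list of Proposition~\ref{P1}), or one of the ranges $1<a<q_i/2$ being empty --- are checked directly and are immediate.
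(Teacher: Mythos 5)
Your argument is correct and is essentially the paper's own approach: the same reduction (using $\sum_{a=1}^{q-1}\zeta_q^{-ar}=-1$, conjugate pairing via Eq.~(\ref{EQ1}), and the relation $\prod_{a=1}^{p-1}(1-\zeta_p^{a})=p$ together with $\log_p p=0$ when $p\in\{q_1,q_2\}$) to a linear form covered by Lemma~\ref{L1} is exactly what the paper carries out in Case~2 of the proof of Theorem~\ref{TH2} and what \cite{TCS} does for this lemma. Only harmless slips: the elimination identity should read $(p-1)\log_p(1-\zeta_p)=-\sum_{a=2}^{p-1}\log_p\bigl((1-\zeta_p^{a})/(1-\zeta_p)\bigr)$, and since Lemma~\ref{L1} takes the ratios over $1<a<p/2$ the relevant $p$-side coefficients after pairing are $\zeta_p^{ar_2}+\zeta_p^{-ar_2}\pm\tfrac{2}{p-1}$, which are nonzero because $\zeta_p^{ar_2}+\zeta_p^{-ar_2}$ is irrational for $p\geq 5$ (the cases $p=2,3$ being exactly your degenerate ones).
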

	
	In the later part of this section, we generalize Lemma \ref{L1}. To accomplish this, we require a necessary and sufficient condition for the multiplicative independence of cyclotomic units for prime powers and a product of distinct primes. This condition is provided by the following proposition established by Pei and Feng in \cite{PF}:
	\begin{proposition} \label{P3}
		For a composite number $q \not \equiv 2 ( \text{ mod } 4),$ the system 
		\begin{equation*}\label{27}
		\bigg \{\frac{1-\zeta_q^h}{1-\zeta_q}\big | \ (h,q)=1, \ 2 \leq h < q/2\bigg\}
		\end{equation*}
		of cyclotomic units of field $\Q(\zeta_q)$  is independent if and only if one of the following conditions 
		are satisfied (here $\alpha_0 \geq 3; \alpha_1, \alpha_2, \alpha_3 \geq 1; p_1, p_2 ,p_3$ are odd primes): 
		\begin{enumerate}
			\item $q=4p_1^{\alpha_1}$; and
			\begin{enumerate}
				\item 2 is a primitive root $\bmod~p_1^{\alpha_1}$; or
				\item 2 is a semi-primitive root $\bmod~p_1^{\alpha_1}$ and $p_1 \equiv 3 \text{ (mod } 4)$.
			\end{enumerate}
			\item $q=2^{\alpha_0}p_1^{\alpha_1}$; the order of $p_1\bmod~2^{\alpha_0})$ is $2^{\alpha_0-2}$, $2^{\alpha_0-3}p_1 \not \equiv-1( \bmod~2^{\alpha_0}),$ and
			\begin{enumerate}
				\item 2 is a primitive root $\bmod~p_1^{\alpha_1}$; or
				\item 2 is a semi-primitive root $\bmod~p_1^{\alpha_1}$ and $p_1 \equiv 3 (\bmod~4)$.
			\end{enumerate}
			\item $q=p_1^{\alpha_1}p_2^{\alpha_2}$; and 
			\begin{enumerate}
				\item when $p_1 \equiv p_2 \equiv 3 (\bmod~4)$: $p_1$ is a semi-primitive root $\bmod~p_2^{\alpha_2}$ and $p_2$ is a semi-primitive root $\bmod~p_1^{\alpha_1},$ or vice versa.
				\item otherwise: $p_1$ and $p_2$ are primitive root $\bmod~p_2^{\alpha_2}$ and $\bmod~p_1^{\alpha_1}$, respectively.
			\end{enumerate}
			\item $q=4p_1^{\alpha_1}p_2^{\alpha_2}; (p_1-1,p_2-1)=2$ and
			\begin{enumerate}
				\item when $p_1 \equiv p_2 \equiv 3 (\bmod~4)$: 2 is a primitive root for one $p$ and a semi-primitive root for another $p$; $p_1$ is primitive root $\bmod~2p_2^{\alpha_2}$ and $p_2$ is a semi-primitive root $\bmod~2p_1^{\alpha_1}$ or vice versa.
				\item when $p_1 \equiv 1, p_2 \equiv 3 (\bmod~4)$: 2 is a primitive root $\bmod~p_2^{\alpha_2}$; $p_1$ and $p_2$ are primitive root $\bmod~p_2^{\alpha_2}$ and $\bmod~p_1^{\alpha_1}$, respectively.
			\end{enumerate}
			\item $q=p_1^{\alpha_1}p_2^{\alpha_2}p_3^{\alpha_3}$; $p_1 \equiv  p_2 \equiv p_3 \equiv 3 (\bmod~4)$: $(p^i-1)/2 ~~  ( 1 \leq i \leq 3)$ are co-prime to each other; and 
			\begin{enumerate}
				\item $p_1, p_2, p_3$ are primitive root $\bmod~p_2^{\alpha_2}$, $\bmod~p_3^{\alpha_3}$, $\bmod~p_1^{\alpha_1}$, respectively and semi-primitive root $\bmod~p_3^{\alpha_3}$, $\bmod~p_1^{\alpha_1}$, $\bmod~p_2^{\alpha_2}$, respectively.
			\end{enumerate}
		\end{enumerate}
	\end{proposition}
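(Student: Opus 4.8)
The plan is to prove Proposition~\ref{P3} along the classical Sinnott-style route: convert multiplicative independence into the non-vanishing of a family of character eigenvalues, evaluate those eigenvalues as imprimitive $L$-values, and then translate the non-vanishing conditions into the stated primitive/semi-primitive root conditions. Throughout I may assume $q$ has at least two distinct prime factors: if $q$ is a prime power the displayed units are the classical cyclotomic units, which generate a finite-index subgroup of the unit group of $\mathbb{Q}(\zeta_q)^+$, so the system is then automatically independent and falls outside the list (1)--(5).

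First I would fix $K=\mathbb{Q}(\zeta_q)$, $K^+=K\cap\mathbb{R}$, $G=\mathrm{Gal}(K/\mathbb{Q})\cong(\mathbb{Z}/q\mathbb{Z})^\times$, $E=\mathcal{O}_{K^+}^\times$, and $n=[K^+:\mathbb{Q}]=\varphi(q)/2$. Each $\tfrac{1-\zeta_q^h}{1-\zeta_q}$ with $(h,q)=1$ is a unit of $K^+$ up to a root of unity, and the displayed system has $n-1=\mathrm{rank}\,E$ elements. Since $q$ has at least two prime factors, $1-\zeta_q$ is itself a unit and $\prod_{(a,q)=1}(1-\zeta_q^a)=\Phi_q(1)=1$, so the $n$ elements $\{1-\zeta_q^a:1\le a<q/2\}$ satisfy the single ``norm'' relation, namely their classes in $E\otimes\mathbb{Q}$ sum to zero. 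A short linear-algebra argument then shows the $n-1$ quotients are independent if and only if this is the only relation, i.e.\ the $\mathbb{Z}[G]$-module generated by the $1-\zeta_q^a$ has rank exactly $n-1$. Passing to the Dirichlet (logarithmic) embedding, decomposing $E\otimes\mathbb{C}=\bigoplus_{\chi}V_\chi$ over the even characters $\chi\neq1$ of $G$, and using that $\{1-\zeta_q^a\}$ is a single $G$-orbit, this rank equals the number of even $\chi\neq1$ for which the eigenvalue
\[
\lambda_\chi:=\sum_{(a,q)=1}\chi(a)\log\lvert 1-\zeta_q^a\rvert
\]
is nonzero. Hence the system is independent if and only if $\lambda_\chi\neq0$ for every even $\chi\neq1$ mod $q$.

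Next I would evaluate $\lambda_\chi$. A standard distribution-relation computation gives, for $\chi$ of conductor $f_\chi$,
\[
\lambda_\chi=\Big(\prod_{\substack{\ell\mid q\\ \ell\nmid f_\chi}}\bigl(1-\overline{\chi}(\ell)\bigr)\Big)\sum_{(a,f_\chi)=1}\chi(a)\log\lvert 1-\zeta_{f_\chi}^a\rvert,
\]
and the second factor is a nonzero multiple of $L(1,\overline{\chi})\neq0$ by Dirichlet. Thus $\lambda_\chi=0$ exactly when some prime $\ell\mid q$ with $\ell\nmid f_\chi$ has $\chi(\ell)=1$. Writing $\chi=\prod_i\chi_i$ under $(\mathbb{Z}/q\mathbb{Z})^\times\cong\prod_i(\mathbb{Z}/p_i^{\alpha_i}\mathbb{Z})^\times$ (with the $2$-part treated separately when $4\mid q$), this says some coordinate $\chi_j$ is trivial while all remaining $\chi_i$ are trivial on the subgroup $\langle -1,\,p_j\rangle$ of the complementary group; dualizing, no such $\chi$ exists if and only if for every prime $p_j\mid q$ the subgroup $\langle -1,\,p_j\rangle$ fills $\prod_{i\neq j}(\mathbb{Z}/p_i^{\alpha_i}\mathbb{Z})^\times$ (with the obvious modification for the $2$-part). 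For $q=p_1^{\alpha_1}p_2^{\alpha_2}$ the complementary group of $p_2$ is the cyclic group $(\mathbb{Z}/p_1^{\alpha_1}\mathbb{Z})^\times$, and $\langle -1,p_2\rangle$ equals it precisely when $p_2$ is a primitive root $\bmod\,p_1^{\alpha_1}$, or has order $\varphi(p_1^{\alpha_1})/2$ with $-1\notin\langle p_2\rangle$ --- the latter being exactly ``$p_2$ a semi-primitive root $\bmod\,p_1^{\alpha_1}$ and $p_1\equiv3\pmod4$'', and symmetrically for $p_1$. This reproduces case~(3); cases~(1)--(2) are the same computation with $2$ (respectively a character mod $2^{\alpha_0}$) in one slot, the non-cyclicity of $(\mathbb{Z}/2^{\alpha_0}\mathbb{Z})^\times$ forcing the extra order conditions; cases~(4)--(5) are the three-prime analogue, where the hypotheses $(p_1-1,p_2-1)=2$ and ``$(p_i-1)/2$ pairwise coprime'' are precisely what make $\langle -1,p_j\rangle$, with $p_j$ of the prescribed order, span the product of the other two cyclic groups.

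The conceptual part (the first three steps) is routine; the main obstacle is the exhaustive case analysis in the last step. One must enumerate every way an even $\chi\neq1$ can be trivial at a prime dividing $q$ but not dividing its conductor, carry the global parity constraint $\prod_i\chi_i(-1)=1$ through all coordinates (this is why the conditions $p_i\equiv3\pmod4$ appear, and why ``primitive root'' in some slots must be weakened to ``semi-primitive root'' in others), and handle the non-cyclic $2$-part of $(\mathbb{Z}/q\mathbb{Z})^\times$ separately. Verifying that the list (1)--(5) is simultaneously necessary and sufficient --- in particular that no modulus with four or more prime factors, or with a higher power of $2$, can meet all the constraints at once --- is where essentially all of the bookkeeping lies and is the step most susceptible to error.
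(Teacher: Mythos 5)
First, note that the paper itself gives no proof of Proposition \ref{P3}: it is imported verbatim from Pei and Feng \cite{PF}, so there is no internal argument to compare yours against; your sketch can only be judged against the statement and the original source. Your reduction is the classical (and, in essence, Pei--Feng's own) route, and the conceptual steps are correct: for $q$ with at least two distinct prime factors, independence of the $\varphi(q)/2-1$ quotients is equivalent to the $G$-orbit of $1-\zeta_q$ having full unit rank, hence to $\lambda_\chi\neq 0$ for every even $\chi\neq 1$ mod $q$; the distribution relation plus $L(1,\overline{\chi})\neq 0$ correctly identifies the only source of vanishing as an Euler factor $1-\overline{\chi}(\ell)$ with $\ell\mid q$, $\ell\nmid f_\chi$; and dualizing (with the parity constraint carried correctly) gives the clean criterion that for each prime $p_j\mid q$ the subgroup generated by $-1$ and $p_j$ must be all of $(\mathbb{Z}/(q/p_j^{\alpha_j})\mathbb{Z})^\times$. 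Your verification of this criterion in the two-odd-prime case is also sound.

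The genuine gap is that the theorem being claimed is the explicit classification (1)--(5), and that is exactly the part you do not carry out: you assert that the bookkeeping ``reproduces'' the list, defer the $2$-adic cases, the three-prime case, and the exclusion of all other shapes of $q$ (four or more prime factors, higher powers of $2$, prime powers) to an unexecuted case analysis, and yourself flag it as the error-prone step. Until that enumeration is done, nothing in the write-up establishes the stated necessary-and-sufficient list. Two concrete symptoms: (i) your own criterion, combined with quadratic reciprocity, yields for $q=p_1^{\alpha_1}p_2^{\alpha_2}$ with $p_1\equiv p_2\equiv 3\pmod 4$ that one prime must be a primitive root and the other a semi-primitive root modulo the complementary prime power (both semi-primitive is impossible, since a semi-primitive root here has odd order and is therefore a quadratic residue, contradicting reciprocity), whereas the statement as quoted in the paper asks for both to be semi-primitive --- so a faithful execution of your plan would not literally ``reproduce case (3)'' as printed, and you would need to address this discrepancy rather than wave at it; (ii) your opening remark that prime powers are ``composite'' numbers for which the system is automatically independent yet lie outside the list contradicts the ``only if'' direction of the statement as quoted, so the scope of ``composite'' must be pinned down, not set aside. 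In short: correct strategy, correct reduction to a group-theoretic criterion, but the classification that constitutes the content of Proposition \ref{P3} is missing.
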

	
	In 2020, Chatterjee and Dhillon extended Proposition \ref{P1} by using \textbf{Property} $I$ and \textbf{Property} $II$ (see \cite{CD2}), which is given as Proposition \ref{P2}.
	
	\begin{proposition} \label{P2}
		Let $\{m_i\}_{i=1}^n$ be a set of natural numbers that satisfies \textbf{Property} $II$ and $\zeta_{m_i}$ be a primitive $m_i$-th root of unity. Then, the numbers
		\begin{align*}
		1-\zeta_{p_{i}},1-\zeta_{q_{i}}, \frac{1-\zeta_{m_i}^{a_i}}{1-\zeta_{m_i}},
		\end{align*}
		where $m_i = p_iq_i$, $1<a_i<\frac{m_i}{2}$ with $(a_i, m_i)=1$ and $1 \leq i \leq n$ are multiplicatively independent.
	\end{proposition}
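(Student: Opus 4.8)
The plan is to assume a nontrivial integer multiplicative relation
\begin{align*}
\prod_{i=1}^{n}(1-\zeta_{p_i})^{x_i}(1-\zeta_{q_i})^{y_i}\;\prod_{i=1}^{n}\;\prod_{\substack{1<a_i<m_i/2\\(a_i,m_i)=1}}\left(\frac{1-\zeta_{m_i}^{a_i}}{1-\zeta_{m_i}}\right)^{z_{i,a_i}}=1
\end{align*}
and to show every exponent vanishes, in two stages: first the exponents of the non-units $1-\zeta_{p_i},1-\zeta_{q_i}$, then the exponents of the cyclotomic units. Throughout I would work inside $L=\Q(\zeta_M)$ with $M=\prod_{i=1}^n m_i$. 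Since \textbf{Property} $II$ forces the $m_i$ to be pairwise coprime, $\mathrm{Gal}(L/\Q)\cong\prod_i\mathrm{Gal}(\Q(\zeta_{m_i})/\Q)$, the subfields $\Q(\zeta_{m_i})$ are pairwise linearly disjoint over $\Q$, and $\Q(\zeta_{m_i})\cap\Q(\zeta_{m_j})=\Q$ for $i\neq j$; these are the structural facts everything rests on.

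\textbf{Step 1 (killing the non-units).} Because each $m_i=p_iq_i$ has two distinct prime divisors, every $\tfrac{1-\zeta_{m_i}^{a_i}}{1-\zeta_{m_i}}$ is a global unit of $\mathcal{O}_L$, whereas $1-\zeta_{p_i}$ (with $p_i=\ell_i^{\alpha}$, $\ell_i$ prime) has strictly positive valuation at every prime of $L$ above $\ell_i$ and is a unit at every other finite prime; likewise $1-\zeta_{q_i}$ relative to the prime $\ell_i'$ dividing $q_i$. Fixing $i$ and a prime $\mathfrak{P}$ of $L$ above $\ell_i$ and applying $\nu_{\mathfrak{P}}$ to the relation, all cyclotomic units contribute $0$, and so do $1-\zeta_{p_j},1-\zeta_{q_j}$ for $j\neq i$ together with $1-\zeta_{q_i}$, since by pairwise coprimality their moduli are prime to $\ell_i$. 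What remains is $x_i\,\nu_{\mathfrak{P}}(1-\zeta_{p_i})=0$, hence $x_i=0$; the symmetric computation at a prime above $\ell_i'$ gives $y_i=0$. Running over all $i$ removes every non-unit factor.

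\textbf{Step 2 (killing the cyclotomic units).} We are left with $\prod_{i}\prod_{a_i}\bigl(\tfrac{1-\zeta_{m_i}^{a_i}}{1-\zeta_{m_i}}\bigr)^{z_{i,a_i}}=1$, and I would decouple it by a Galois projection. Fix $j$, put $H_j=\mathrm{Gal}(L/\Q(\zeta_{m_j}))$, and apply $\prod_{\sigma\in H_j}\sigma$. A factor of index $j$ lies in $\Q(\zeta_{m_j})$ and is merely raised to the power $|H_j|$; by linear disjointness $H_j$ surjects onto $\mathrm{Gal}(\Q(\zeta_{m_i})/\Q)$ for $i\neq j$, so a factor of index $i\neq j$ becomes a fixed power of $N_{\Q(\zeta_{m_i})/\Q}\bigl(\tfrac{1-\zeta_{m_i}^{a_i}}{1-\zeta_{m_i}}\bigr)=\Phi_{m_i}(1)/\Phi_{m_i}(1)=1$, using that $m_i$ is not a prime power. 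Thus
\begin{align*}
\prod_{a_j}\left(\frac{1-\zeta_{m_j}^{a_j}}{1-\zeta_{m_j}}\right)^{|H_j|\,z_{j,a_j}}=1\qquad\text{in }\Q(\zeta_{m_j}).
\end{align*}
Now \textbf{Property} $I$ for $m_j$ is precisely condition (3) of Proposition \ref{P3} (Pei--Feng), so the cyclotomic units $\bigl\{\tfrac{1-\zeta_{m_j}^{h}}{1-\zeta_{m_j}}:(h,m_j)=1,\ 2\le h<m_j/2\bigr\}$ are independent; hence $|H_j|z_{j,a_j}=0$, i.e. $z_{j,a_j}=0$ for all $a_j$. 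Letting $j$ range over $1,\dots,n$ completes the argument.

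\textbf{Expected main obstacle.} Steps 1 and 2 are where the two hypotheses are spent: pairwise coprimality (\textbf{Property} $II$) underpins the linear disjointness and the separation of the ramified primes $\ell_i,\ell_i'$, while \textbf{Property} $I$ is invoked solely so that Proposition \ref{P3} applies to each $m_j$. I expect the delicate point to be the Galois projection in Step 2: one must verify that $\prod_{\sigma\in H_j}\sigma$ genuinely collapses the global relation to a single relation inside each $\Q(\zeta_{m_j})$, which relies on the norm of every ``foreign'' cyclotomic unit being \emph{exactly} $1$ — so $\Phi_{m_i}(1)=1$ for composite non-prime-power $m_i$ is essential, not merely $N=\pm1$ — and on the surjectivity of $H_j$ onto each $\mathrm{Gal}(\Q(\zeta_{m_i})/\Q)$. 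Everything else is bookkeeping, and the genuinely hard independence statement (within a single composite modulus) is outsourced to Pei--Feng, which is exactly why \textbf{Property} $I$ was imposed.
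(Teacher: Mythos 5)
Your proposal is correct, and its skeleton (kill the non-units first, then decouple the moduli, then invoke Pei--Feng) matches what the paper does; but the technical devices differ, and it is worth noting that the paper does not actually reprove Proposition \ref{P2} --- it cites Chatterjee--Dhillon \cite{CD2} and only proves in-house the richer Proposition \ref{P4}, so the natural comparison is with that proof. There, the non-unit exponents are removed by taking the norm down to $\Q$ (your local argument, taking $\nu_{\mathfrak{P}}$ at a prime above $\ell_i$, is an equivalent substitute), and the moduli are decoupled by peeling off one index at a time: the relation is split so that one side lies in $\Q(\zeta_{m_1})$ and the other in $\Q(\zeta_{r})$ with $r=\prod_{i\geq 2}m_i$, the intersection $\Q(\zeta_{m_1})\cap\Q(\zeta_r)=\Q$ forces both sides to be rational units, hence $\pm1$, and a squaring step removes the sign before Proposition \ref{P3} is applied. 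Your decoupling instead applies the norm $N_{L/\Q(\zeta_{m_j})}=\prod_{\sigma\in H_j}\sigma$ and uses $N_{\Q(\zeta_{m_i})/\Q}\bigl(\tfrac{1-\zeta_{m_i}^{a_i}}{1-\zeta_{m_i}}\bigr)=\Phi_{m_i}(1)/\Phi_{m_i}(1)=1$ for $i\neq j$ (legitimate, since each $m_i$ has two distinct prime factors and the restriction $H_j\to\mathrm{Gal}(\Q(\zeta_{m_i})/\Q)$ is onto with equal fibres); this collapses the relation to a single modulus in one stroke, avoids the $\pm1$ ambiguity and the squaring trick entirely, and leaves only the harmless factor $|H_j|$ in the exponents. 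What the paper's peeling argument buys in exchange is that it adapts directly to the larger system of Proposition \ref{P4}, where prime-power-level units $\tfrac{1-\zeta_{p_j}^{b_j}}{1-\zeta_{p_j}}$, $\tfrac{1-\zeta_{q_j}^{c_j}}{1-\zeta_{q_j}}$ survive your projection at the index $i=j$ and an additional intra-modulus separation is still required. One small imprecision: \textbf{Property} $I$ is not ``precisely'' condition (3) of Proposition \ref{P3} --- it additionally demands $(\alpha_1,\phi(p_2^{\alpha_2}))=1=(\alpha_2,\phi(p_1^{\alpha_1}))$ --- but it implies condition (3), which is all your Step 2 needs.
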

	
	Further, motivated by the idea used to prove Proposition \ref{P2}, we have the following proposition which plays a pivotal role in the proof of Theorem \ref{TH6}:
	
	\begin{proposition} \label{P4}
		Let $\{m_i\}_{i=1}^n$ be a set of natural numbers that satisfies \textbf{Property} $II$ and $\zeta_{m_i}$ be a primitive $m_i$-th root of unity. Then, the following numbers:
		\begin{align*}
		1-\zeta_{p_{i}}, 1-\zeta_{q_{i}}, \frac{1-\zeta_{m_i}^{a_i}}{1-\zeta_{m_i}}, \frac{1-\zeta_{p_{i}}^{b_i}}{1-\zeta_{p_{i}}}, \frac{1-\zeta_{q_{i}}^{c_i}}{1-\zeta_{q_{i}}},
		\end{align*}
		where $m_i = p_iq_i$, $1<a_i<\frac{m_i}{2}$ with $(a_i, m_i)=1$, $1<b_i<\frac{p_{i}}{2}$, $1<c_i<\frac{q_{i}}{2}$, and $1 \leq i \leq n$ are multiplicatively independent.
	\end{proposition}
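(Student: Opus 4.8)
The plan is to follow the template behind Proposition \ref{P2}: first strip off the non-unit factors $1-\zeta_{p_i}$, $1-\zeta_{q_i}$ by a ramification argument, then reduce the surviving relation (now among cyclotomic units only) to a single modulus, and finally appeal to the Pei--Feng criterion (Proposition \ref{P3}, the case $q=p_1^{\alpha_1}p_2^{\alpha_2}$, whose hypotheses are exactly \textbf{Property} $I$) together with the prime-power instance of Proposition \ref{P1}. Suppose a multiplicative relation among the listed numbers holds. For each $i$ let $\ell_i$ be the rational prime below $p_i$ and choose a prime $\mathfrak p$ of $\Q(\zeta_M)$ over $\ell_i$, where $M=\prod_i m_i$. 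Every ratio $\tfrac{1-\zeta_{m_j}^{a_j}}{1-\zeta_{m_j}}$, $\tfrac{1-\zeta_{p_j}^{b_j}}{1-\zeta_{p_j}}$, $\tfrac{1-\zeta_{q_j}^{c_j}}{1-\zeta_{q_j}}$ is a global unit, and $1-\zeta_{p_j}$ ($j\neq i$) and every $1-\zeta_{q_j}$ are units at $\mathfrak p$ because the $m_i$ are pairwise coprime; only $1-\zeta_{p_i}$ has positive $\mathfrak p$-valuation, so its exponent in the relation must vanish. Symmetrically all exponents of all $1-\zeta_{q_i}$ vanish.

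What remains is a relation among the cyclotomic units $\tfrac{1-\zeta_{m_i}^{a_i}}{1-\zeta_{m_i}}$, $\tfrac{1-\zeta_{p_i}^{b_i}}{1-\zeta_{p_i}}$, $\tfrac{1-\zeta_{q_i}^{c_i}}{1-\zeta_{q_i}}$ ($1\le i\le n$). Since the $m_i$ are pairwise coprime, $\mathrm{Gal}(\Q(\zeta_M)/\Q)\cong\prod_i\mathrm{Gal}(\Q(\zeta_{m_i})/\Q)$, and the logarithmic (Dirichlet) embedding of the block of factors carrying index $i$ is supported only on those non-trivial characters of $\mathrm{Gal}(\Q(\zeta_M)/\Q)$ that factor through the $i$-th coordinate; these character sets are pairwise disjoint, and the trivial-character component vanishes for units by the product formula. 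Hence each block is a root of unity, and since all the cyclotomic numbers in the statement are real up to roots of unity, each block equals $1$. So it suffices to prove the statement for a single $m=pq$ (with $p,q$ the two prime-power factors of $m$).

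For the core case I must show that $\tfrac{1-\zeta_{pq}^a}{1-\zeta_{pq}}$ (for $(a,pq)=1$, $1<a<pq/2$), $\tfrac{1-\zeta_p^b}{1-\zeta_p}$ (for $1<b<p/2$) and $\tfrac{1-\zeta_q^c}{1-\zeta_q}$ (for $1<c<q/2$) are multiplicatively independent in $\Q(\zeta_{pq})^{\times}$. I would decompose by $G=\mathrm{Gal}(\Q(\zeta_{pq})/\Q)=G_p\times G_q$: the cyclotomic units of $\Q(\zeta_p)$ (resp.\ $\Q(\zeta_q)$) contribute to the logarithmic embedding only through characters trivial on $G_q$ (resp.\ $G_p$), while the norm maps to the two subfields can be written out explicitly from $\prod_{j\bmod q}(1-x\zeta_q^{\,j})=(1-x^{q})/(1-x)$ --- one finds $N_{\Q(\zeta_{pq})/\Q(\zeta_p)}\bigl(\tfrac{1-\zeta_{pq}^a}{1-\zeta_{pq}}\bigr)$ is the cyclotomic unit $(1-\zeta_p^{aq})(1-\zeta_p)/\bigl((1-\zeta_p^a)(1-\zeta_p^q)\bigr)$ of $\Q(\zeta_p)$, symmetrically for $q$, and $N_{\Q(\zeta_{pq})/\Q(\zeta_p)}\bigl(\tfrac{1-\zeta_q^c}{1-\zeta_q}\bigr)=N_{\Q(\zeta_q)/\Q}\bigl(\tfrac{1-\zeta_q^c}{1-\zeta_q}\bigr)=1$. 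These formulas let me peel the $G_p$-trivial and $G_q$-trivial parts off the level-$pq$ units, so the relation decouples into: a relation purely among characters of conductor $pq$, forcing all exponents of the $\tfrac{1-\zeta_{pq}^a}{1-\zeta_{pq}}$ to be $0$ by Proposition \ref{P3}(3) (this is where \textbf{Property} $I$ enters, securing independence of the Pei--Feng system); and relations among the cyclotomic units of $\Q(\zeta_p)$ and of $\Q(\zeta_q)$, forcing the remaining exponents to be $0$ by Proposition \ref{P1} applied to the single prime powers $p$ and $q$ (the $b=1$, $c=1$ factors being trivial).

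The step I expect to be the main obstacle is precisely this decoupling in the core case: one must be certain that the only multiplicative relations tying the ``new'' level-$pq$ cyclotomic units to those of the subfields $\Q(\zeta_p)$ and $\Q(\zeta_q)$ are the ones exhibited by the norm computations above --- equivalently, that the Sinnott-type relations special to the non-prime-power cyclotomic field $\Q(\zeta_{pq})$ are completely absorbed into the subfield directions being subtracted. Once that is made precise, Proposition \ref{P3} and Proposition \ref{P1} close the argument, and the ramification step and the reduction to one modulus are routine.
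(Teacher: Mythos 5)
Your steps (1) and (2) are fine, and they run parallel to the paper's own reduction (the paper kills the exponents of $1-\zeta_{p_i},1-\zeta_{q_i}$ by a norm computation rather than a valuation at a chosen prime, and reduces to a single modulus via $\Q(\zeta_{m_1})\cap\Q(\zeta_r)=\Q$ rather than via character supports; the two routes are interchangeable). The problem is the step you yourself flag as the main obstacle: it is not merely the hard part, it cannot be closed, because the single-modulus statement is false. The Pei--Feng system $\{\frac{1-\zeta_m^{a}}{1-\zeta_m}: (a,m)=1,\ 1<a<m/2\}$ at level $m=pq$ has exactly $\phi(m)/2-1$ members, and $\phi(m)/2-1$ is precisely the Dirichlet unit rank of $\Q(\zeta_m)$. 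Under \textbf{Property} $I$ this system is independent by Proposition \ref{P3}, hence generates a subgroup of full rank; consequently every further infinite-order unit of $\Q(\zeta_m)$ --- in particular each $\frac{1-\zeta_p^{b}}{1-\zeta_p}$ and $\frac{1-\zeta_q^{c}}{1-\zeta_q}$ --- is dependent on it modulo roots of unity, hence genuinely dependent after raising to the (finite) order of the torsion. Your own norm identity exhibits the dependence: $N_{\Q(\zeta_{pq})/\Q(\zeta_p)}\bigl(\frac{1-\zeta_{pq}^{a}}{1-\zeta_{pq}}\bigr)$ is a product of Galois conjugates $\frac{1-\zeta_{pq}^{as}}{1-\zeta_{pq}^{s}}$, each lying (up to a root of unity) in the group generated by the listed level-$pq$ units, while by your formula it equals $\frac{(1-\zeta_p^{aq})(1-\zeta_p)}{(1-\zeta_p^{a})(1-\zeta_p^{q})}$, a generally nontrivial product of the $\frac{1-\zeta_p^{b}}{1-\zeta_p}$ up to roots of unity. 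The smallest admissible case already kills the statement: $m_1=15$ satisfies \textbf{Property} $I$, the three units $\frac{1-\zeta_{15}^{a}}{1-\zeta_{15}}$ ($a=2,4,7$) are independent and of full rank $3$ in $\Q(\zeta_{15})$, and $\frac{1-\zeta_5^{2}}{1-\zeta_5}$, of absolute value $2\cos(\pi/5)$, is a fourth infinite-order unit in a rank-three unit group, so the six numbers of Proposition \ref{P4} for $m_1=15$ are multiplicatively dependent.

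So the ``Sinnott-type relations'' you were worried about are not a technicality to be absorbed; they are exactly what destroys the claim once the subfield units are adjoined (this is also why Proposition \ref{P2}, which omits them, is consistent with the rank count while Proposition \ref{P4} is not). For comparison, the paper's own proof founders at the same spot: after Eq. (\ref{EQ4}) it asserts that, because one side lies in $\Q(\zeta_{q_1})$, the exponents $\beta_{b_1}$ must vanish, but membership of a product in a subfield does not force individual exponents to vanish, and by the rank count no correct argument can exist. In short, your instinct about where the danger lies was exactly right; the gap there is not a fixable gap in your argument but a counterexample to the proposition itself, and any application downstream (Lemma \ref{L3}, Theorem \ref{TH6}) can only rely on the smaller system of Proposition \ref{P2}.
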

	\begin{proof}
		Let there be integers $\alpha_{i}$, $\beta_{i}$, $\beta_{b_i}$, $\beta_{c_i}$ and $\delta_{a_i}$, if possible such that
		\begin{align}
		&\prod_{1 \leq i \leq n}({1-\zeta_{p_{i}}})^{\alpha_{i}} \prod_{1 \leq i \leq n}({1-\zeta_{q_{i}}})^{\beta_{i}} \prod_{\substack{1<a_i < m_i/2\\ (a_i, m_i)=1 \\ 1 \leq i \leq n}}\Bigg(\frac{1-\zeta_{m_i}^{a_i}}{1-\zeta_{m_i}}\Bigg)^{\delta_{a_i}} \nonumber\\
		&\qquad \prod_{\substack{1<b_i < p_{i}/2 \\ 1 \leq i \leq n}}\Bigg(\frac{1-\zeta_{p_{i}}^{b_i}}{1-\zeta_{p_{i}}}\Bigg)^{\beta_{b_i}} \prod_{\substack{1<c_i < q_{i}/2 \\ 1 \leq i \leq n}}\Bigg(\frac{1-\zeta_{q_{i}}^{c_i}}{1-\zeta_{q_{i}}}\Bigg)^{\beta_{c_i}} = 1. \label{EQ2}
		\end{align}
		Considering the norm on both sides, for $A_{i}$, $B_{i}$ $\in \mathbb{N}$ we obtain:
		\begin{align*}
		\prod_{1 \leq i \leq n}p_{i}^{\alpha_{i} A_{i}} \prod_{1 \leq i \leq n}q_{i}^{\beta_{i} B_{i}} = 1.
		\end{align*}
		This imply that $\alpha_{i} = 0$ and $\beta_{i} = 0$, for $1 \leq i \leq n$ as $p_{i}'s$ and $q_{i}'s$ are distinct primes.
		Accordingly, Eq. (\ref{EQ2}) reduces to
		\begin{align*}
		\prod_{\substack{1<a_i < m_i/2\\ (a_i, m_i)=1 \\ 1 \leq i \leq n}}\Bigg(\frac{1-\zeta_{m_i}^{a_i}}{1-\zeta_{m_i}}\Bigg)^{\delta_{a_i}} \prod_{\substack{1<b_i < p_{i}/2 \\ 1 \leq i \leq n}}\Bigg(\frac{1-\zeta_{p_{i}}^{b_i}}{1-\zeta_{p_{i}}}\Bigg)^{\beta_{b_i}} \prod_{\substack{1<c_i < q_{i}/2 \\ 1 \leq i \leq n}}\Bigg(\frac{1-\zeta_{q_{i}}^{c_i}}{1-\zeta_{q_{i}}}\Bigg)^{\beta_{c_i}} = 1.
		\end{align*}
		Rewriting the aforementioned equation, we get:
		\begin{align}
		&\prod_{\substack{1<a_i < m_i/2\\ (a_i, m_i)=1 \\ 2 \leq i \leq n}}\Bigg(\frac{1-\zeta_{m_i}^{a_i}}{1-\zeta_{m_i}}\Bigg)^{\delta_{a_i}} \prod_{\substack{1<b_i < p_{i}/2 \\ 2 \leq i \leq n}}\Bigg(\frac{1-\zeta_{p_{i}}^{b_i}}{1-\zeta_{p_{i}}}\Bigg)^{\beta_{b_i}} \prod_{\substack{1<c_i < q_{i}/2 \\ 2 \leq i \leq n}}\Bigg(\frac{1-\zeta_{q_{i}}^{c_i}}{1-\zeta_{q_{i}}}\Bigg)^{\beta_{c_i}} \nonumber \\
		&\quad = \prod_{\substack{1<a_1 < m_1/2\\ (a_1, m_1)=1 }}\Bigg(\frac{1-\zeta_{m_1}^{a_1}}{1-\zeta_{m_1}}\Bigg)^{-\delta_{a_1}} \prod_{1<b_1 < p_{1}/2}\Bigg(\frac{1-\zeta_{p_{1}}^{b_1}}{1-\zeta_{p_{1}}}\Bigg)^{-\beta_{b_1}} \prod_{1<c_1 < q_{1}/2}\Bigg(\frac{1-\zeta_{q_{1}}^{c_1}}{1-\zeta_{q_{1}}}\Bigg)^{-\beta_{c_1}}. \label{EQ3}
		\end{align}
		Clearly, the right side of the aforementioned equation belongs to the number field $\mathbb{Q}(\zeta_{m_1})$, whereas the left side belongs to $\mathbb{Q}(\zeta_{r})$, where $$r= \displaystyle\prod_{2 \leq i \leq n}m_i.$$ 
		Also, it is known that $\mathbb{Q}(\zeta_{m_1}) \cap \mathbb{Q}(\zeta_{r}) = \mathbb{Q} $. As a result, the two sides of Eq. (\ref{EQ3}) are rational numbers having norm $1$ and hence
		\begin{align*}
		&\prod_{\substack{1<a_i < m_i/2\\ (a_i, m_i)=1 \\ 2 \leq i \leq n}}\Bigg(\frac{1-\zeta_{m_i}^{a_i}}{1-\zeta_{m_i}}\Bigg)^{\delta_{a_i}} \prod_{\substack{1<b_i < p_{i}/2 \\ 2 \leq i \leq n}}\Bigg(\frac{1-\zeta_{p_{i}}^{b_i}}{1-\zeta_{p_{i}}}\Bigg)^{\beta_{b_i}} \prod_{\substack{1<c_i < q_{i}/2 \\ 2 \leq i \leq n}}\Bigg(\frac{1-\zeta_{q_{i}}^{c_i}}{1-\zeta_{q_{i}}}\Bigg)^{\beta_{c_i}}\\
		&\quad =\prod_{\substack{1<a_1 < m_1/2\\ (a_1, m_1)=1 }}\Bigg(\frac{1-\zeta_{m_1}^{a_1}}{1-\zeta_{m_1}}\Bigg)^{-\delta_{a_1}} \prod_{1<b_1 < p_{1}/2}\Bigg(\frac{1-\zeta_{p_{1}}^{b_1}}{1-\zeta_{p_{1}}}\Bigg)^{-\beta_{b_1}} \prod_{1<c_1 < q_{1}/2}\Bigg(\frac{1-\zeta_{q_{1}}^{c_1}}{1-\zeta_{q_{1}}}\Bigg)^{-\beta_{c_1}}\\
		& \quad = \pm 1. 
		\end{align*}
		Consequently, squaring both the sides, we obtain:
		\begin{align*}
		&\prod_{\substack{1<a_i < m_i/2\\ (a_i, m_i)=1 \\ 2 \leq i \leq n}}\Bigg(\frac{1-\zeta_{m_i}^{a_i}}{1-\zeta_{m_i}}\Bigg)^{2\delta_{a_i}} \prod_{\substack{1<b_i < p_{i}/2 \\ 2 \leq i \leq n}}\Bigg(\frac{1-\zeta_{p_{i}}^{b_i}}{1-\zeta_{p_{i}}}\Bigg)^{2\beta_{b_i}} \prod_{\substack{1<c_i < q_{i}/2 \\ 2 \leq i \leq n}}\Bigg(\frac{1-\zeta_{q_{i}}^{c_i}}{1-\zeta_{q_{i}}}\Bigg)^{2\beta_{c_i}}\\
		& \quad =\prod_{\substack{1<a_1 < m_1/2\\ (a_1, m_1)=1 }}\Bigg(\frac{1-\zeta_{m_1}^{a_1}}{1-\zeta_{m_1}}\Bigg)^{-2\delta_{a_1}} \prod_{1<b_1 < p_{1}/2}\Bigg(\frac{1-\zeta_{p_{1}}^{b_1}}{1-\zeta_{p_{1}}}\Bigg)^{-2\beta_{b_1}} \prod_{1<c_1 < q_{1}/2}\Bigg(\frac{1-\zeta_{q_{1}}^{c_1}}{1-\zeta_{q_{1}}}\Bigg)^{-2\beta_{c_1}}\\
		& \quad = 1. 
		\end{align*}
		Now, consider
		\begin{align}
		\prod_{\substack{1<a_1 < m_1/2\\ (a_1, m_1)=1 }}\Bigg(\frac{1-\zeta_{m_1}^{a_1}}{1-\zeta_{m_1}}\Bigg)^{-2\delta_{a_1}} \prod_{1<b_1 < p_{1}/2}\Bigg(\frac{1-\zeta_{p_{1}}^{b_1}}{1-\zeta_{p_{1}}}\Bigg)^{-2\beta_{b_1}} \prod_{1<c_1 < q_{1}/2}\Bigg(\frac{1-\zeta_{q_{1}}^{c_1}}{1-\zeta_{q_{1}}}\Bigg)^{-2\beta_{c_1}} = 1. \label{EQ4}
		\end{align}
		Thus, we have:
		\begin{align*}
		\prod_{\substack{1<a_1 < m_1/2\\ (a_1, m_1)=1 }}\Bigg(\frac{1-\zeta_{m_1}^{a_1}}{1-\zeta_{m_1}}\Bigg)^{-2\delta_{a_1}} \prod_{1<b_1 < p_{1}/2}\Bigg(\frac{1-\zeta_{p_{1}}^{b_1}}{1-\zeta_{p_{1}}}\Bigg)^{-2\beta_{b_1}}  = \prod_{1<c_1 < q_{1}/2}\Bigg(\frac{1-\zeta_{q_{1}}^{c_1}}{1-\zeta_{q_{1}}}\Bigg)^{2\beta_{c_1}}.
		\end{align*}
		Note that the right-hand side of the above equation belongs to $\mathbb{Q}(\zeta_{q_{1}})$, while the left-hand side belongs to $\mathbb{Q}(\zeta_{m_{1}})$.  Furthermore, it is essential to emphasize that the left side should also belong to $\mathbb{Q}(\zeta_{q_{1}})$. Finally, using coprimality condition $(a_1, m_1) =1$, it follows that $\beta_{b_1} = 0$, for all $1 < b_1 < p_{1}/2 $. Once again, rewriting the Eq. (\ref{EQ4}) as:
		\begin{align*}
		\prod_{\substack{1<a_1 < m_1/2\\ (a_1, m_1)=1 }}\Bigg(\frac{1-\zeta_{m_1}^{a_1}}{1-\zeta_{m_1}}\Bigg)^{-2\delta_{a_1}} \prod_{1<c_1 < q_{1}/2}\Bigg(\frac{1-\zeta_{q_{1}}^{c_1}}{1-\zeta_{q_{1}}}\Bigg)^{-2\beta_{c_1}} = \prod_{1<b_1 < p_{1}/2}\Bigg(\frac{1-\zeta_{p_{1}}^{b_1}}{1-\zeta_{p_{1}}}\Bigg)^{2\beta_{b_1}}
		\end{align*}
		and proceeding in the similar way, we get $\beta_{c_1} = 0$, for all $1 < c_1 < q_{1}/2 $.
		Now from Eq. (\ref{EQ4}), we have:
		\begin{align*}
		\prod_{\substack{1<a_1 < m_1/2\\ (a_1, m_1)=1 }}\Bigg(\frac{1-\zeta_{m_1}^{a_1}}{1-\zeta_{m_1}}\Bigg)^{-2\delta_{a_1}} = 1.
		\end{align*}
		Then, by Proposition \ref{P3}, we have $\delta_{a_1} = 0$, for all $1<a_1 < m_1/2$ with $(a_1, m_1)=1$. Similarly, we get $\delta_{a_i} = 0$, for all $1<a_i < m_i/2$ with $(a_i, m_i)=1$, $\beta_{b_i} = 0$, for all $1 < b_i < p_{i}/2 $, and $\beta_{c_i} = 0$, for all $1 < c_i < q_{i}/2 $ and $1 \leq i \leq n$. This completes the proof.
	\end{proof}
	
	Note that the above proposition is the further generalization of Proposition \ref{P1} and Proposition \ref{P2}. Now, using Theorem \ref{TH5} and Proposition \ref{P4}, we have the following lemma which served as the extension of Lemma \ref{L1}:
	\begin{lemma} \label{L3}
		Assuming \textbf{Property} $II$ and let $\zeta_{m_i}$ be a primitive $m_i$-th root of unity. Let $r_{q_{i}}$,$u_{{q_i}_{b_i}}$, $t_{{m_i}_{a_i}}$ be arbitrary algebraic numbers, not all zero. Further, let $u_{{q_i}_{b_i}}$, $t_{{m_i}_{a_i}}$ be not all zero, when $p \in \mathcal{J}$. Then,
		\begin{align*}
		\sum_{q_{i} \in \mathcal{J}}{r_{q_{i}} \log_p(1-\zeta_{q_{i}})} + \sum_{\substack{q_{i} \in \mathcal{J},\\ 1 < b_i< q_{i}/2}}{u_{{q_i}_{b_i}} \log_p \left(\frac{1-\zeta_{q_{i}}^{b_i}}{1-\zeta_{q_{i}}}\right)} + \sum_{\substack{m_i \in \mathcal{M},\\ 1 < a_i< m_i/2}}{t_{{m_i}_{a_i}} \log_p \left(\frac{1-\zeta_{m_i}^{a_i}}{1-\zeta_{m_i}}\right)} 
		\end{align*}
		is transcendental.
	\end{lemma}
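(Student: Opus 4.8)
The plan is to recognise the displayed quantity as a $p$-adic linear form in logarithms of the multiplicatively independent family of Proposition \ref{P4} and then feed it to Theorem \ref{TH5}. Write it as
\[
L=\sum_{q\in\mathcal{J}} r_{q}\log_p(1-\zeta_{q})+\sum_{\substack{q\in\mathcal{J}\\ 1<b<q/2}} u_{q_{b}}\log_p\!\left(\frac{1-\zeta_{q}^{b}}{1-\zeta_{q}}\right)+\sum_{\substack{m_i\in\mathcal{M}\\ 1<a_i<m_i/2,\ (a_i,m_i)=1}} t_{{m_i}_{a_i}}\log_p\!\left(\frac{1-\zeta_{m_i}^{a_i}}{1-\zeta_{m_i}}\right).
\]
Since $\mathcal{J}$ is exactly the set of prime divisors of the $m_i$ and each $m_i=p_iq_i$, the algebraic numbers inside the $\log_p$'s are precisely the numbers $1-\zeta_{p_i}$, $1-\zeta_{q_i}$, $\frac{1-\zeta_{m_i}^{a_i}}{1-\zeta_{m_i}}$, $\frac{1-\zeta_{p_i}^{b_i}}{1-\zeta_{p_i}}$, $\frac{1-\zeta_{q_i}^{c_i}}{1-\zeta_{q_i}}$ of Proposition \ref{P4}, hence multiplicatively independent over $\mathbb{Q}$. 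I would fix the number field $K$ generated by all of these together with every coefficient, and a constant $c$ that serves in Theorem \ref{TH5} for every subfield of $K$.

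Next I would reduce to the situation where all the bases are $p$-adic units. This is automatic if $p\notin\mathcal{J}$, since $1-\zeta_{q}$ has $\mathbb{Q}$-norm $q$ (prime to $p$) and each quotient is a cyclotomic, hence global, unit. If instead $p\in\mathcal{J}$, so that $p$ is one of the odd primes $p_i$ or $q_i$, then $1-\zeta_p$ is a uniformiser rather than a unit and no power of it lies near $1$ $p$-adically, so Theorem \ref{TH5} cannot act on it directly. To remove it, start from $\prod_{a=1}^{p-1}(1-\zeta_p^{a})=\Phi_p(1)=p$, apply $\log_p$, use $\log_p p=0$ and that $\log_p$ annihilates roots of unity, and pair $a$ with $p-a$ through $1-\zeta_p^{p-a}=-\zeta_p^{-a}(1-\zeta_p^{a})$ (cf. Eq. (\ref{EQ1})); this yields
\[
\log_p(1-\zeta_p)=-\frac{2}{p-1}\sum_{1<a<p/2}\log_p\!\left(\frac{1-\zeta_p^{a}}{1-\zeta_p}\right).
\]
Substituting into $L$ eliminates the term $r_p\log_p(1-\zeta_p)$ and leaves a $p$-adic linear form in the logarithms of the remaining units $1-\zeta_q$ ($q\in\mathcal{J}\setminus\{p\}$) and of all the cyclotomic units, with $K$-algebraic coefficients; the hypothesis that the $u_{q_b}$ and $t_{{m_i}_{a_i}}$ are not all zero when $p\in\mathcal{J}$ is exactly what keeps these new coefficients from all vanishing.

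It remains to treat $L$ when its bases $\alpha_1,\dots,\alpha_m$ are $p$-adic units and multiplicatively independent. I would fix a prime $\mathfrak{P}\mid p$ of $K$ with residue field of size $p^{f}$ and raise every base to a common exponent $e=(p^{f}-1)p^{M}$: then $\alpha_j^{\,p^{f}-1}\in 1+\mathfrak{P}$, so $\alpha_j^{\,e}\in 1+\mathfrak{P}^{k(M)}$ with $k(M)\to\infty$, and choosing $M$ large forces $|\alpha_j^{\,e}-1|_p<p^{-c}$. Writing $L=\sum_j(\beta_j/e)\log_p(\alpha_j^{\,e})$, the bases $\alpha_j^{\,e}$ are still multiplicatively independent (a relation among them is a relation among the $\alpha_j$), all lie in $K$, and the coefficients $\beta_j/e\in K$ are not all zero; Theorem \ref{TH5} then gives that $L$ is transcendental. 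I expect the genuinely delicate point to be the elimination of the non-unit $1-\zeta_p$ in the case $p\in\mathcal{J}$: one has to verify that this substitution does not annihilate every coefficient, which is precisely why Proposition \ref{P4} is stated with that specific list of generators and why the lemma carries the extra hypothesis on the $u$'s and $t$'s.
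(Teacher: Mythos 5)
Your proposal is correct and is in substance the paper's own argument: the paper's proof is simply a compressed version of it, observing that any $\alpha\in\mathbb{Z}[\zeta_\theta]$ prime to $p$ satisfies $|\alpha^{T}-1|_p<p^{-K}$ for a suitable power $T$, and then combining the multiplicative independence of Proposition \ref{P4} with Theorem \ref{TH5}, which is exactly your $e=(p^{f}-1)p^{M}$ power trick. Your explicit elimination of $\log_p(1-\zeta_p)$ via $\prod_{a=1}^{p-1}(1-\zeta_p^{a})=p$ when $p\in\mathcal{J}$ is the step the paper leaves implicit (it is inherited from the proof of Lemma \ref{L1} and is precisely why the lemma carries the extra hypothesis on the $u$'s and $t$'s), so the two proofs follow the same route, yours merely spelling out the details.
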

	
	\begin{proof}
		Let $\theta = \displaystyle \prod_{m_i \in \mathcal{M}}{m_i}$. For any $\alpha \in \mathbb{Z}[\zeta_\theta]$, with $(p, \alpha) = \mathbb{Z}[\zeta_{\theta}]$ and $K \in \mathbb{N}$, one has
		\begin{align*}
		|\alpha^T - 1|_p< p ^{-K}
		\end{align*} 
		for some $T \in \mathbb{N}$. By choosing $K$ sufficiently large and using Theorem \ref{TH5} and Proposition \ref{P4}, we get the desired result.
	\end{proof}
	
	Additionally, in 2020, Chatterjee and Dhillon introduced the following proposition in \cite{CD3}, offering the necessary and sufficient condition for the multiplicative independence of cyclotomic numbers when dealing with any composite number $q \equiv 2(\bmod ~4)$. 
	\begin{proposition} \label{P5}
		For any composite number $q \equiv 2(\bmod ~4)$, the system 
		\begin{align*}
		\Bigg\{\frac{1-\zeta_q^h}{1-\zeta_q}: (h,q) = 1, 1 < h < q/2 \Bigg\}
		\end{align*}
		is multiplicatively independent if and only if $q$ satisfies one of the following conditions:
		\begin{enumerate}
			\item $q = 2p^n$, where $p$ is an odd prime
			\item $q= 2m$, where $m$ satisfies condition III and V in Proposition \ref{P3}.
		\end{enumerate}
	\end{proposition}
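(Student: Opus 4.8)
The plan is to reduce Proposition~\ref{P5} to the cases already settled — Proposition~\ref{P3} for an odd modulus with at least two distinct prime factors, and Proposition~\ref{P1} for an odd prime power — by passing from level $q$ to level $m$, where $q = 2m$. Since $q\equiv 2\pmod 4$, write $q = 2m$ with $m>1$ odd; as $\gcd(2,m)=1$ one may take $\zeta_q = -\zeta_m$, so that $\mathbb{Q}(\zeta_q) = \mathbb{Q}(\zeta_m)$. For $h$ coprime to $q$ (hence odd) one gets $1-\zeta_q^h = 1+\zeta_m^h = (1-\zeta_m^{2h})/(1-\zeta_m^h)$, whence, writing $\xi(k) := (1-\zeta_m^k)/(1-\zeta_m)$,
\[
\frac{1-\zeta_q^h}{1-\zeta_q} \;=\; \frac{\xi(2h)}{\xi(h)\,\xi(2)} \qquad\text{in } \mathbb{Q}(\zeta_m)^{\times}.
\]
Using $1-\zeta_m^{-j} = -\zeta_m^{-j}(1-\zeta_m^j)$ to fold each index into $(0,m/2)$ — which changes a factor only by a root of unity, with $\xi(1)=1$ — every element of the level-$q$ system $\{(1-\zeta_q^h)/(1-\zeta_q): (h,q)=1,\ 1<h<q/2\}$ becomes, modulo roots of unity, a monomial in $\{\xi(k): (k,m)=1,\ 1<k<m/2\}$, which is precisely the system of Proposition~\ref{P3} (or of Proposition~\ref{P1} when $m = p^n$).

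The next step records that this substitution is invertible. The index set $\{h: 1<h<m,\ h\text{ odd},\ (h,m)=1\}$ and the index set $\{k: 1<k<m/2,\ (k,m)=1\}$ are each in canonical bijection with the set $G'$ of non-trivial classes of $(\mathbb{Z}/m\mathbb{Z})^{\times}/\{\pm1\}$ (every class has a unique odd representative in $(0,m)$ and a unique representative in $(0,m/2)$); both have size $\varphi(m)/2-1$. In these coordinates $(1-\zeta_q^h)/(1-\zeta_q)$ has exponent vector $\mathbf{e}_{[2h]} - \mathbf{e}_{[h]} - \mathbf{e}_{[2]}$ with the convention $\mathbf{e}_{[1]}=0$, so the substitution is the square matrix $M = (\sigma_2 - I) - R$, understood after deleting the trivial class, where $\sigma_2$ is the permutation of $(\mathbb{Z}/m\mathbb{Z})^{\times}/\{\pm1\}$ given by multiplication by $2$, $I$ the identity, and $R$ the rank-one operator coming from the uniform $-\mathbf{e}_{[2]}$ term. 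The crux — and the step I expect to be the main obstacle — is to show $\det M \neq 0$; I expect this to hold for every odd $m>1$, provable by a direct analysis of $\sigma_2 - I$ on $\mathbb{Q}[(\mathbb{Z}/m\mathbb{Z})^{\times}/\{\pm1\}]$ (whose kernel is the span of the functions constant on $\langle 2\rangle$-orbits) together with a check that the rank-one correction $R$, plus the deletion of the trivial class, removes that kernel; the degenerate rows where $[h]$, $[2h]$ or $[2]$ collide or equal $[1]$ need separate attention, but in small cases ($m = 5,9,15,21$, giving $\det M = -2,-3,4,-6$) they only reinforce a diagonal entry and do no harm. Note that the easy half of this — that multiplicative independence of the level-$q$ units itself forces $\det M\neq 0$ — is unconditional: if $Me=0$ for some $e\neq0$, then $\prod_h\big((1-\zeta_q^h)/(1-\zeta_q)\big)^{e_h}$ is a root of unity, hence a suitable power of it is $1$.

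Granting $\det M \neq 0$, the proof concludes. A nontrivial relation $\prod_h\big((1-\zeta_q^h)/(1-\zeta_q)\big)^{e_h}=1$ is, after raising to a power large enough to absorb the roots of unity introduced by the folding (the device used in the proof of Proposition~\ref{P4}), equivalent to $\prod_k \xi(k)^{(Me)_k}=1$; invertibility of $M$ then gives $Me=0\iff e=0$ and, conversely, carries any nontrivial relation among the $\xi(k)$ back to one among the level-$q$ units. Hence the level-$q$ system is multiplicatively independent if and only if $\{\xi(k): (k,m)=1,\ 1<k<m/2\}$ is. By Proposition~\ref{P1} the latter always holds when $m = p^n$ is an odd prime power — this is case~(1), $q = 2p^n$ — and by Proposition~\ref{P3}, of whose conditions only III and V can hold for an odd modulus with several prime factors, the latter holds exactly when $m$ satisfies condition III or V — this is case~(2), $q = 2m$. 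The dependence half of Proposition~\ref{P3} handles the remaining cases (odd $m$ with at least two distinct prime factors, not satisfying III or V), which completes the equivalence.
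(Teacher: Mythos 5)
You are proving a statement the paper itself does not prove (Proposition \ref{P5} is quoted from \cite{CD3}), so I can only judge your argument on its merits. The reduction identity $(1-\zeta_q^h)/(1-\zeta_q)=\xi(2h)/(\xi(h)\xi(2))$ is correct, but the step you yourself flag as the crux, namely $\det M\neq 0$ for every odd $m>1$, is not just left unproven --- it is false, and with it the equivalence ``level-$q$ independent iff level-$m$ independent'' on which the whole plan rests. Writing $G=(\mathbb{Z}/m\mathbb{Z})^{\times}/\{\pm 1\}$ and $H=\langle[2]\rangle\leq G$, one checks that $e\in\ker M$ exactly when its extension $\tilde e$ by $0$ at $[1]$ has the form $s\mathbf{e}_{[1]}+f$ with $f$ constant on $H$-cosets, equal to $-s$ on $H$ and with the values on the other cosets summing to $s$; hence $\dim\ker M=[G:H]-1$, which is positive whenever $2$ and $-1$ fail to generate $(\mathbb{Z}/m\mathbb{Z})^{\times}$. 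Your test cases $m=5,9,15,21$ all have $\langle 2,-1\rangle=(\mathbb{Z}/m\mathbb{Z})^{\times}$, which is why you saw nonzero determinants; $m=17,31,41,\dots$ do not.

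Concretely, for $m=17$, $q=34$, the kernel vector ($e_h=+1$ for $h=3,5,7,11$ and $e_h=-1$ for $h=9,13,15$) produces an actual relation: with $u_h=(1-\zeta_{34}^h)/(1-\zeta_{34})$ one finds $u_3u_5u_7u_{11}/(u_9u_{13}u_{15})=\zeta^{11}$ for a primitive $17$-th root of unity $\zeta$ (verify directly from $1+\zeta^a=(1-\zeta^{2a})/(1-\zeta^a)$ and folding), so taking $17$-th powers gives a nontrivial multiplicative relation among the $u_h$. Thus the level-$34$ system is dependent although the level-$17$ system $\{\xi(k):1<k<17/2\}$ is independent: the transfer you want fails, and by your own ``easy half'' your method proves dependence in a case where the statement, as quoted, asserts independence ($q=2p^{n}$ with $p=17$). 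The obstruction is governed by whether $\langle 2,-1\rangle=(\mathbb{Z}/p^{n}\mathbb{Z})^{\times}$, exactly parallel to the hypotheses on $2$ in conditions $1$--$2$ of Proposition \ref{P3} for $q=4p_1^{\alpha_1}$; so either the statement as transcribed here omits such hypotheses from \cite{CD3}, or a correct proof must proceed quite differently --- in either case your proposal does not establish it, and the missing determinant argument cannot be supplied in the generality you assume.
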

	
	Let $\mathcal{H}$ be set of all those $q$ that satisfy the two conditions of Proposition \ref{P5}. A key part in the proof of Theorem \ref{TH7} is played by this proposition. 
	
	\section{Proofs of the Main Theorems}
	
	\begin{proof}[Proof of Theorem \ref{TH2}]
		Consider the sets
		\begin{align*}
		S_1&= \{\psi_p(r/p^n) + \gamma_p : 1 \leq r < p^n,  (r,p) = 1 \} ~~~ \text{and} \\
		S_2 &= \Bigg\{\frac{p^{\mu}}{p^{\mu}-1}H^{\prime}_{\mu}(r/q^n) +\gamma_p : 1 \leq r < q^n, (r,q) = 1, q\neq p, q \in \mathcal{P} \Bigg\},
		\end{align*}
		where $\mu$ as in (\ref{EQ6}).
		Let $S =  S_1 \cup S_2$. 
		Suppose that $a , b \in S$ are distinct and algebraic.
		Then, possibilities of $(a, b)$ are:
		\begin{enumerate}
			\item $(a, b) = ( \psi_p(r_1/p^n) +\gamma_p, \hspace{2mm} \psi_p(r_2/p^n) +\gamma_p)$.
			\item $(a, b) = \Big(\frac{p^{\mu_1}}{p^{\mu_1}-1}H^{\prime}_{\mu_1}(r_1/q_1^n) +\gamma_p, \hspace{2mm}  \frac{p^{\mu_2}}{p^{\mu_2}-1}H^{\prime}_{\mu_2}(r_2/q_2^n) +\gamma_p\Big)$.
			\item $(a, b) = ( \psi_p(r_1/p^n) +\gamma_p,\hspace{2mm} \frac{p^{\mu}}{p^{\mu}-1}H^{\prime}_{\mu}(r_2/q^n) +\gamma_p)$.
		\end{enumerate}
		The case where $a,b \in S_1$ has been proved in \cite{TCS}. Now, consider when  $a,b \in S_2$. So, using Eq. (\ref{EQ6}), we have:
		\begin{align*}
		&\frac{p^{\mu_1}}{p^{\mu_1}-1}H^{\prime}_{\mu_1}(r_1/q_1^n) +\gamma_p - \Big(\frac{p^{\mu_2}}{p^{\mu_2}-1}H^{\prime}_{\mu_2}(r_2/q_2^n) +\gamma_p\Big)\\
		& \quad = - \log_p q_1^n + \displaystyle\sum_{a=1}^{q_1^n - 1}{\zeta_{q_1^n}^{-ar_1}} \log_p (1- \zeta_{q_1^n}^{a}) + \log_p q_2^n - \sum_{t=1}^{q_2^n -1}{\zeta_{q_2^n}^{-tr_2}} \log_p (1- \zeta_{q_2^n}^t).
		\end{align*}
		This can be further simplified using Eq. (\ref{EQ1}) and the fact that for $p = q^n$ and $\zeta$ be the $q^n$-th primitive root of unity, we have $q= \prod ( 1- \zeta)$, where product runs over all the primitive $q^n$-th root of unity. So, we have:
		\begin{align*}
		&- \log_p q_1^n + \displaystyle\sum_{a=1}^{q_1^n - 1}{\zeta_{q_1^n}^{-ar_1}} \log_p (1- \zeta_{q_1^n}^{a}) + \log_p q_2^n - \sum_{t=1}^{q_2^n -1}{\zeta_{q_2^n}^{-tr_2}} \log_p (1- \zeta_{q_2^n}^t)\\
		&\quad = -n \log_p \Big(\displaystyle\prod_{\substack{b=1\\(b,q_1)=1}}^{q_1^n - 1}{(1- \zeta_{q_1^n}^b)\Big)} + \log_p(1- \zeta_{q_1^n}) + n \log_p \Big(\displaystyle\prod_{\substack{s=1\\(s,q_2)=1}}^{q_2^n - 1}{(1- \zeta_{q_2^n}^s)\Big)} - \log_p(1- \zeta_{q_2^n})\\
		&\quad \quad  + \sum_{1<a<q_1^n/2} (\zeta_{q_1^n}^{-ar_1} + \zeta_{q_1^n}^{ar_1})\log_p \Big(\frac{1-\zeta_{q_1^n}^a}{1-\zeta_{q_1^n}}\Big) - \sum_{1<t<q_2^n/2} (\zeta_{q_2^n}^{-tr_2} + \zeta_{q_2^n}^{tr_2})\log_p \Big(\frac{1-\zeta_{q_2^n}^t}{1-\zeta_{q_2^n}}\Big)\\
		& \quad = \log_p(1- \zeta_{q_1^n}) - n {\log_p(1-\zeta_{q_1^n})}- n {\displaystyle\sum_{\substack{b=2\\(b,q_1)=1}}^{q_1^n - 1}\log_p(1-\zeta_{q_1^n}^b)} - \log_p(1- \zeta_{q_2^n}) + n{\log_p(1-\zeta_{q_2^n})}\\ 
		&\quad \quad +n {\displaystyle\sum_{\substack{s=2\\(s,q_2)=1}}^{q_2^n - 1}\log_p(1-\zeta_{q_2^n}^s)} + \displaystyle\sum_{\substack{1<a<q_1^n/2\\(a,q_1)=1}} \alpha_a \log_p \Big(\frac{1-\zeta_{q_1^n}^a}{1-\zeta_{q_1^n}}\Big) - \displaystyle\sum_{\substack{1<t<q_2^n/2\\(t,q_2)=1}}\beta_t \log_p \Big(\frac{1-\zeta_{q_2^n}^t}{1-\zeta_{q_2^n}}\Big)\\
		& \quad = \delta \log_p(1- \zeta_{q_1^n}) + \eta \log_p(1- \zeta_{q_2^n}) + \sum_{\substack{1<a<q_1^n/2\\(a,q_1)=1}} \alpha_a^{\prime} \log_p \left(\frac{1-\zeta_{q_1^n}^a}{1-\zeta_{q_1^n}}\right)\\
		&\quad \quad -\sum_{\substack{1<t<q_2^n/2\\(t,q_2)=1}}\beta_t^{\prime} \log_p \left(\frac{1-\zeta_{q_2^n}^t}{1-\zeta_{q_2^n}}\right),
		\end{align*}
		\noindent
		where $\delta$, $\eta$, $\alpha_a^{\prime}$'s and $\beta_t^{\prime}$'s are algebraic numbers.\\
		\noindent
		By Lemma \ref{L1}, this is transcendental which is a contradiction.\\
		\noindent
		Now, again using Eq. (\ref{EQ7}) and Eq. (\ref{EQ6}) for $a \in S_1$ and $b \in S_2$, respectively, we have:
		\begin{align*}
		&\psi_p(r_1/p^n) +\gamma_p - \Big(\frac{p^{\mu}}{p^{\mu}-1}H^{\prime}_{\mu}(r_2/q^n) +\gamma_p\Big)\\
		& \quad = - \log_p p^n + \displaystyle\sum_{a=1}^{p^n - 1}{\zeta_{p^n}^{-ar_1}} \log_p (1- \zeta_{p^n}^{a}) + \log_p q^n - \sum_{t=1}^{q^n -1}{\zeta_{q^n}^{-tr_2}} \log_p (1- \zeta_{q^n}^t).
		\end{align*}
		\noindent
		Working on the similar lines and using $\log_p(p)=0$, we get:
		\begin{align*}
		&\sum_{\substack{1<a<p^n/2\\(a,p)=1}}{\alpha_a \log_p\Big(\frac{1- \zeta_{p^n}^{a}}{1-\zeta_{p^n}}\Big)} - \log_p(1-\zeta_{p^n}) + n \log_p \Big(\displaystyle\prod_{\substack{b=1\\(b,q)=1}}^{q^n - 1}{(1- \zeta_{q^n}^b)\Big)}\\
		&\quad \quad - \sum_{1<t<q^n/2}{(\zeta_{q^n}^{-tr_2} + \zeta_{q^n}^{tr_2}) \log_p\Big(\frac{1-\zeta^t_{q^n}}{1-\zeta_{q^n}}\Big)} -\log_p(1-\zeta_{q^n})\\
		&\quad = -\log_p(1- \zeta_{p^n})+ \delta \log_p(1- \zeta_{q^n}) + \displaystyle\sum_{\substack{1<a<p^n/2\\(a,p)=1}} \alpha_a^{\prime} \log_p \Big(\frac{1-\zeta_{p^n}^a}{1-\zeta_{p^n}}\Big) \\
		&\quad \quad \quad - \displaystyle\sum_{\substack{1<t<q^n/2\\(t,q)=1}}\beta_t^{\prime} \log_p \Big(\frac{1-\zeta_{q^n}^t}{1-\zeta_{q^n}}\Big),
		\end{align*}
		\noindent
		where $\delta$, $\alpha_a^{\prime}$'s, and $\beta_t^{\prime}$'s are algebraic numbers. Finally, using Lemma \ref{L1}, we conclude that it is transcendental, hence gives us a contradiction.\\
		For the second part of the proof, we take into account two scenarios - one in which $q$ is fixed and the other in which $q$ varies. 
		\begin{align*}
		\text{\textbf{Case 1:}}~~&\frac{p^{\mu}}{p^{\mu}-1}H^{\prime}_{\mu}(r_1/q) - \frac{p^{\mu}}{p^{\mu}-1}H^{\prime}_{\mu}(r_2/q) =  \displaystyle\sum_{a=1}^{q - 1}{\zeta_{q}^{-ar_1}} \log_p (1- \zeta_{q}^{a})\\
		& \quad - \sum_{t=1}^{q -1}{\zeta_{q}^{-tr_2}} \log_p (1- \zeta_{q}^t) = \sum_{1<a<q/2}(\zeta_q^{-ar_1} + \zeta_q^{ar_1} - \zeta_q^{-ar_2} - \zeta_q^{ar_2}) \log_p\Bigg(\frac{1-\zeta_q^{a}}{1-\zeta_q}\Bigg).
		\end{align*}
		Since $1 \leq r_1, r_2 < q/2$, the above linear form in logarithms is transcendental by Lemma~\ref{L1}. 
		\begin{align*}
		\text{\textbf{Case 2:}}~~ &\frac{p^{\mu_1}}{p^{\mu_1}-1}H^{\prime}_{\mu_1}(r_1/q_1) - \frac{p^{\mu_2}}{p^{\mu_2}-1}H^{\prime}_{\mu_2}(r_2/q_2) \\
		& \quad = - \log_p q_1 ~+ ~ \displaystyle\sum_{a=1}^{q_1 - 1}{\zeta_{q_1}^{-ar_1}} \log_p (1- \zeta_{q_1}^{a}) ~+~ \log_p q_2 ~-~ \sum_{t=1}^{q_2 -1}{\zeta_{q_2}^{-tr_2}} ~\log_p (1- \zeta_{q_2}^t)\\
		& \quad = -\displaystyle\sum_{b=1}^{q_1 - 1} \log_p (1- \zeta_{q_1}^{b})  + \log_p(1-\zeta_{q_1}) +  \sum_{1<a<q_1/2}(\zeta_{q_1}^{-ar_1} + \zeta_{q_1}^{ar_1}) \log_p\Bigg(\frac{1-\zeta_{q_1}^{a}}{1-\zeta_{q_1}}\Bigg) \\
		& \quad \quad + \displaystyle\sum_{s=1}^{q_2 - 1} \log_p (1- \zeta_{q_2}^{s}) - \log_p(1-\zeta_{q_2}) -  \sum_{1<t<q_2/2}(\zeta_{q_2}^{-tr_2} + \zeta_{q_2}^{tr_2}) \log_p\Bigg(\frac{1-\zeta_{q_2}^{t}}{1-\zeta_{q_2}}\Bigg)\\
		& \quad = \delta \log_p(1- \zeta_{q_1})+ \eta \log_p(1- \zeta_{q_2}) + \displaystyle\sum_{1<a<q_1/2} \alpha_a^{\prime} \log_p \left(\frac{1-\zeta_{q_1}^a}{1-\zeta_{q_1}}\right)\\
		&\quad \quad - \displaystyle\sum_{1<t<q_2/2}\beta_t^{\prime} \log_p \left(\frac{1-\zeta_{q_2}^t}{1-\zeta_{q_2}}\right),
		\end{align*}
		where $\delta$, $\eta$, $\alpha_a^{\prime}$'s, and $\beta_t^{\prime}$'s are algebraic numbers and it is transcendental by Lemma \ref{L1}. This completes the proof.
	\end{proof}
	
	\begin{proof}[Proof of Theorem \ref{TH6}]
		Consider the set
		$$S_3 = \{\psi_p(r/pq) + \gamma_p : 1 \leq r < pq,  (r,pq) = 1\}.$$
		Let there be two distinct algebraic elements in the set. Then by using Eq. (\ref{EQ7}), we have:
		\begin{align*}
		&\psi_p(r_1/pq) +\gamma_p - (\psi_p(r_2/pq) +\gamma_p)\\
		&\quad = - \log_p pq + \displaystyle\sum_{a=1}^{pq - 1}{\zeta_{pq}^{-ar_1}} \log_p (1- \zeta_{pq}^{a}) + \log_p pq - \sum_{t=1}^{pq -1}{\zeta_{pq}^{-tr_2}} \log_p (1- \zeta_{pq}^t)\\
		& \quad = \sum_{\substack{a=1\\ (a,pq)=1}}^{pq-1}\zeta_{pq}^{-ar_1}\log_p (1- \zeta_{pq}^{a}) + \sum_{\substack{a=1\\ (a,pq)=p}}^{pq-1}\zeta_{pq}^{-ar_1}\log_p (1- \zeta_{pq}^{a})\\
		&\quad \qquad + \sum_{\substack{a=1\\ (a,pq)=q}}^{pq-1}\zeta_{pq}^{-ar_1}\log_p (1- \zeta_{pq}^{a}) - \sum_{\substack{t=1\\ (t,pq)=1}}^{pq-1}\zeta_{pq}^{-tr_2}\log_p (1- \zeta_{pq}^{t})\\
		&\quad \qquad \qquad - \sum_{\substack{t=1\\ (t,pq)=p}}^{pq-1}\zeta_{pq}^{-tr_2}\log_p (1- \zeta_{pq}^{t}) - \sum_{\substack{t=1\\ (t,pq)=q}}^{pq-1}\zeta_{pq}^{-tr_2}\log_p (1- \zeta_{pq}^{t})\\
		&\quad = \sum_{\substack{1<a<pq/2\\ (a,pq)=1}}\alpha_a\log_p \Bigg(\frac{1- \zeta_{pq}^{a}}{1- \zeta_{pq}}\Bigg) + \sum_{1<b<q/2}\beta_b\log_p \Bigg(\frac{1- \zeta_{q}^{b}}{1- \zeta_{q}}\Bigg) + \sum_{1<c<p/2}\delta_c\log_p \Bigg(\frac{1- \zeta_{p}^{c}}{1- \zeta_{p}}\Bigg),
		\end{align*}
		where $\alpha_a$'s, $\beta_b$'s, and $\delta_c$'s are algebraic numbers and it is transcendental by Lemma \ref{L3}. This is a contradiction to our assumption that both are algebraic. Note that the proof also establishes the distinctness of the numbers $\psi_p(r/pq) + \gamma_p$, where $1 \leq r < pq/2$ and $(r,pq) = 1$. This completes the proof.
	\end{proof}
	\begin{proof}[Proof of Theorem \ref{TH3}]
		Consider the set
		\begin{align*}
		S_4 = \Bigg\{\frac{p^{\mu}}{p^{\mu}-1}H^{\prime}_{\mu}(r/m_i) +\gamma_p : 1 \leq r < m_i, ~1 \leq i \leq n,~ (r,m_i) = 1, p \nmid m_i, ~m_i \in \mathcal{M}\Bigg\},
		\end{align*}
		where $\mu$ as in Eq. (\ref{EQ6}). Let $m_1 = q_1^{a_1} q_2^{a_2}$ and $m_2 = q_3^{a_3} q_4^{a_4}$ be two distinct algebraic elements of $S_4$. Then, by using Eq. (\ref{EQ6}), we have:
		\begin{align}
		&\frac{p^{\mu_1}}{p^{\mu_1}-1}H^{\prime}_{\mu_1}(r_1/q_1^{a_1} q_2^{a_2}) +\gamma_p - \frac{p^{\mu_2}}{p^{\mu_2}-1}H^{\prime}_{\mu_2}(r_2/q_3^{a_3} q_4^{a_4}) -\gamma_p \nonumber\\
		&\quad = - \log_p (q_1^{a_1} q_2^{a_2}) + \displaystyle\sum_{a=1}^{q_1^{a_1} q_2^{a_2} - 1}{\zeta_{q_1^{a_1} q_2^{a_2}}^{-ar_1}} \log_p (1- \zeta_{q_1^{a_1} q_2^{a_2}}^{a}) + \log_p (q_3^{a_3} q_4^{a_4}) \nonumber\\
		& \quad \quad - \sum_{t=1}^{q_3^{a_3} q_4^{a_4} -1}{\zeta_{q_3^{a_3} q_4^{a_4}}^{-tr_2}} \log_p (1- \zeta_{q_3^{a_3} q_4^{a_4}}^t) \nonumber \\
		&\quad  = -a_1\log_p q_1 -a_2\log_p q_2+ \sum_{\substack{a=1\\ (a,q_1 q_2)=1}}^{q_1^{a_1} q_2^{a_2}-1}\zeta_{q_1^{a_1} q_2^{a_2}}^{-ar_1}\log_p (1- \zeta_{q_1^{a_1} q_2^{a_2}}^{a}) \nonumber \\
		& \quad \quad + \sum_{\substack{a=1\\ (a,q_1 q_2) \neq 1}}^{q_1^{a_1} q_2^{a_2}-1}\zeta_{q_1^{a_1} q_2^{a_2}}^{-ar_1}\log_p (1- \zeta_{q_1^{a_1} q_2^{a_2}}^{a}) + a_3\log_p q_3 + a_4\log_p q_4 \nonumber \\
		&\quad \quad \quad - \sum_{\substack{t=1\\ (t,q_3 q_4)=1}}^{q_3^{a_3} q_4^{a_4}-1}\zeta_{q_3^{a_3} q_4^{a_4}}^{-tr_2}\log_p (1- \zeta_{q_3^{a_3} q_4^{a_4}}^{t}) - \sum_{\substack{t=1\\ (t,q_3 q_4) \neq 1}}^{q_3^{a_3} q_4^{a_4}-1}\zeta_{q_3^{a_3} q_4^{a_4}}^{-tr_2}\log_p (1- \zeta_{q_3^{a_3} q_4^{a_4}}^{t})\nonumber\\
		& \quad =-\displaystyle a_1\log_p \Bigg( \prod_{b=1}^{q_1 - 1}(1 - \zeta_{q_1}^b)\Bigg)-a_2\log_p \Bigg( \prod_{d=1}^{q_2 - 1}(1 - \zeta_{q_2}^d)\Bigg)-\log_p(1- \zeta_{q_1^{a_1} q_2^{a_2}}) \nonumber\\
		& \quad \quad + \sum_{\substack{1< a<q_1^{a_1} q_2^{a_2}/2\\ (a,q_1 q_2)=1}}\alpha_a \log_p \Bigg(\frac{1- \zeta_{q_1^{a_1} q_2^{a_2}}^{a}}{1- \zeta_{q_1^{a_1} q_2^{a_2}}}\Bigg) + \sum_{\substack{1<a<q_1^{a_1} q_2^{a_2}/2\\ (a,q_1 q_2) \neq 1}}\delta_a\log_p \Bigg(\frac{1- \zeta_{q_1^{a_1} q_2^{a_2}}^{a}}{1- \zeta_{q_1^{a_1} q_2^{a_2}}}\Bigg) \nonumber\\
		& \quad \quad \quad + \displaystyle a_3\log_p \Bigg( \prod_{r=1}^{q_3 - 1}(1 - \zeta_{q_3}^r)\Bigg) + a_4\log_p \Bigg( \prod_{s=1}^{q_4 - 1}(1 - \zeta_{q_4}^s)\Bigg) + \log_p(1- \zeta_{q_3^{a_3} q_4^{a_4}}) \nonumber\\
		&\quad \quad \quad \quad - \sum_{\substack{1<t<q_3^{a_3} q_4^{a_4}/2\\ (t,q_3 q_4)=1}}\beta_a \log_p \Bigg(\frac{1- \zeta_{q_3^{a_3} q_4^{a_4}}^{t}}{1- \zeta_{q_3^{a_3} q_4^{a_4}}}\Bigg) - \sum_{\substack{1<t<q_3^{a_3} q_4^{a_4}/2\\ (t,q_3 q_4) \neq 1}}\gamma_a\log_p \Bigg(\frac{1- \zeta_{q_3^{a_3} q_4^{a_4}}^{t}}{1- \zeta_{q_3^{a_3} q_4^{a_4}}}\Bigg) \nonumber\\
		&\quad = a_1\log_p(1-\zeta_{q_1}) + a_2\log_p(1-\zeta_{q_2}) - a_3\log_p(1-\zeta_{q_3}) -a_4\log_p(1-\zeta_{q_4}) \nonumber\\
		& \quad \quad -\log_p(1- \zeta_{q_1^{a_1} q_2^{a_2}}) + \log_p(1- \zeta_{q_3^{a_3} q_4^{a_4}})- \sum_{1<b<q_1/2}\alpha_b \log_p \Bigg(\frac{1- \zeta_{q_1 }^{b}}{1- \zeta_{q_1}}\Bigg) \nonumber \\
		& \quad \quad \quad - \sum_{1<d<q_2/2}\delta_d \log_p \Bigg(\frac{1- \zeta_{q_2 }^{d}}{1- \zeta_{q_2}}\Bigg) + \sum_{1<r<q_3/2}\beta_r \log_p \Bigg(\frac{1- \zeta_{q_3 }^{r}}{1- \zeta_{q_3}}\Bigg) \nonumber \\
		& \quad \quad \quad \quad + \sum_{1<s<q_4/2}\gamma_s \log_p \Bigg(\frac{1- \zeta_{q_4}^{s}}{1- \zeta_{q_4}}\Bigg) + \sum_{\substack{1< a<q_1^{a_1} q_2^{a_2}/2\\ (a,q_1 q_2)=1}}\alpha_a \log_p \Bigg(\frac{1- \zeta_{q_1^{a_1} q_2^{a_2}}^{a}}{1- \zeta_{q_1^{a_1} q_2^{a_2}}}\Bigg) \nonumber\\
		& \quad \quad \quad \quad \quad + \sum_{\substack{1< a<q_1^{a_1} q_2^{a_2}/2\\ (a,q_1 q_2) \neq 1}}\delta_a\log_p \Bigg(\frac{1- \zeta_{q_1^{a_1} q_2^{a_2}}^{a}}{1- \zeta_{q_1^{a_1} q_2^{a_2}}}\Bigg)  - \sum_{\substack{1<t<q_3^{a_3} q_4^{a_4}/2\\ (t,q_3 q_4)=1}}\beta_a \log_p \Bigg(\frac{1- \zeta_{q_3^{a_3} q_4^{a_4}}^{t}}{1- \zeta_{q_3^{a_3} q_4^{a_4}}}\Bigg) \nonumber \\
		& \quad \quad \quad \quad \quad \quad - \sum_{\substack{1<t<q_3^{a_3} q_4^{a_4}/2\\ (t,q_3 q_4) \neq 1}}\gamma_a\log_p \Bigg(\frac{1- \zeta_{q_3^{a_3} q_4^{a_4}}^{t}}{1- \zeta_{q_3^{a_3} q_4^{a_4}}}\Bigg), \label{EQ5}
		\end{align}
		where $a_1$, $a_2$, $a_3$, $a_4$, $\alpha_b$'s, $\delta_d$'s, $\beta_r$'s, $\gamma_s$'s, $\alpha_a$'s, $\delta_a$'s, $\beta_a$'s, and $\gamma_a$'s are algebraic numbers. By applying Theorem \ref{TH5} to the maximal linearly independent set of logarithm of algebraic terms in Eq. (\ref{EQ5}), we find that it is transcendental, which leads to a contradiction.
	\end{proof}
	
	\begin{proof}[Proof of Corollary \ref{C1}]
		Let $S^{\prime} = S_3 \cup S_4$. Suppose that $a, b \in S^{\prime}$ are distinct and algebraic. Then, the possibilities of $(a,b)$ are:
		\begin{enumerate}
			\item $a \in S_3$ and $b \in S_3$.
			\item $a \in S_4$ and $b \in S_4$.
			\item $a \in S_3$ and $b \in S_4$.
		\end{enumerate}
		Theorem \ref{TH6} and Theorem \ref{TH3} already address the $1st$ and $2nd$ case, respectively. Therefore, we consider the scenario when $a \in S_3$ and $b \in S_4$. Using Eq. (\ref{EQ7}) and Eq. (\ref{EQ6}), we have:
		\begin{align}
		&\psi_p(r_1/pq) +\gamma_p - \Big(\frac{p^{\mu}}{p^{\mu}-1}H^{\prime}_{\mu}(r_2/q_1^mq_2^n) +\gamma_p\Big) \nonumber\\
		&\quad  = - \log_p pq + \displaystyle\sum_{a=1}^{pq - 1}{\zeta_{pq}^{-ar_1}} \log_p (1- \zeta_{pq}^{a}) + \log_p q_1^mq_2^n - \sum_{t=1}^{q_1^mq_2^n -1}{\zeta_{q_1^mq_2^n}^{-tr_2}} \log_p (1- \zeta_{q_1^mq_2^n}^t) \nonumber\\
		& \quad = -\log_pq + \sum_{\substack{a=1\\ (a,pq)=1}}^{pq-1}\zeta_{pq}^{-ar_1}\log_p (1- \zeta_{pq}^{a}) + \sum_{\substack{a=1\\ (a,pq)=p}}^{pq-1}\zeta_{pq}^{-ar_1}\log_p (1- \zeta_{pq}^{a}) \nonumber \\
		&\quad \quad + \sum_{\substack{a=1\\ (a,pq)=q}}^{pq-1}\zeta_{pq}^{-ar_1}\log_p (1- \zeta_{pq}^{a}) +m\log_pq_1 + n\log_p q_2 -\sum_{\substack{t=1\\ (t,q_1 q_2)=1}}^{q_1^{m} q_2^{n}-1}\zeta_{q_1^{m} q_2^{n}}^{-tr_2}\log_p (1- \zeta_{q_1^{m} q_2^{n}}^{t}) \nonumber\\
		&\quad \quad \quad - \sum_{\substack{t=1\\ (t,q_1 q_2) \neq 1}}^{q_1^{m} q_2^{n}-1}\zeta_{q_1^{m} q_2^{n}}^{-tr_2}\log_p (1- \zeta_{q_1^{m} q_2^{n}}^{t}). \nonumber
		\end{align}
		After simplification of the terms, we get:
		\begin{align}
		& -\alpha_1 \log_p(1-\zeta_q)  + \log_p(1-\zeta_{pq})+ \sum_{\substack{1<a<pq/2\\ (a,pq)=1}}\alpha_a\log_p \Bigg(\frac{1- \zeta_{pq}^{a}}{1- \zeta_{pq}}\Bigg) \nonumber\\
		& \quad + \sum_{1<b<q/2}\beta_b\log_p \Bigg(\frac{1- \zeta_{q}^{b}}{1- \zeta_{q}}\Bigg) + \sum_{1<c<p/2}\delta_c\log_p \Bigg(\frac{1- \zeta_{p}^{c}}{1- \zeta_{p}}\Bigg)  +\alpha_2\log_p(1-\zeta_{q_1}) \nonumber\\
		& \quad \quad + \alpha_3\log_p(1-\zeta_{q_2}) - \log_p(1-\zeta_{q_1^{m}q_2^{n}}) + \sum_{1<b<q_1/2}\alpha_b \log_p \Bigg(\frac{1- \zeta_{q_1 }^{b}}{1- \zeta_{q_1}}\Bigg) \nonumber \\
		&\quad \quad \quad + \sum_{1<d<q_2/2}\delta_d \log_p \Bigg(\frac{1- \zeta_{q_2 }^{d}}{1- \zeta_{q_2}}\Bigg)- \sum_{\substack{1<t< q_1^{m} q_2^{n}/2\\ (t,q_1 q_2)=1}}\alpha_t \log_p \Bigg(\frac{1- \zeta_{q_1^{m} q_2^{n}}^{a}}{1- \zeta_{q_1^{m} q_2^{n}}}\Bigg) \nonumber \\
		& \quad \quad \quad \quad + \sum_{\substack{1< t< q_1^{m} q_2^{n}/2 \\ (t,q_1 q_2) \neq 1}}\delta_t\log_p \Bigg(\frac{1- \zeta_{q_1^{m} q_2^{n}}^{t}}{1- \zeta_{q_1^{m} q_2^{n}}}\Bigg), \label{EQ8}
		\end{align}
		where $\alpha_1$, $\alpha_2$, $\alpha_3$, $\alpha_a$'s,$\alpha_b$'s, $\alpha_t$'s, $\beta_b$'s, $\delta_c$'s, $\delta_d$'s, and $\delta_t$'s are algebraic numbers. By applying Theorem \ref{TH5} to the maximal linearly independent set of the logarithm of algebraic numbers in Eq. (\ref{EQ8}), we find that it is transcendental, which leads to a contradiction.
	\end{proof}
	
	\begin{proof}[Proof of Theorem \ref{TH7}]
		Consider the sets
		$$S_5=\{\psi_p(r/q) + \gamma_p : 1 \leq r < q,  (r,q) = 1 \}$$ 
		and
		$$S_6=\Bigg\{\frac{p^{\mu}}{p^{\mu}-1}H^{\prime}_{\mu}(r/q) +\gamma_p : 1 \leq r < q, (r,q) = 1 \Bigg\}.$$
		Firstly, we discuss the case when $p \mid q$. Let there be two algebraic and distinct elements of the set $S_5$. Then by Eq. (\ref{EQ7}), we have:
		\begin{align*}
		&\psi_p(r_1/q) +\gamma_p - (\psi_p(r_2/q) +\gamma_p)\\
		&\quad = - \log_p q + \displaystyle\sum_{a=1}^{q - 1}{\zeta_{q}^{-ar_1}} \log_p (1- \zeta_{q}^{a}) + \log_p q - \sum_{t=1}^{q -1}{\zeta_{q}^{-tr_2}} \log_p (1- \zeta_{q}^t)\\
		&\quad = \sum_{\substack{1<a<q/2\\ (a,q)=1}}\alpha_a\log_p \Bigg(\frac{1- \zeta_{q}^{a}}{1- \zeta_{q}}\Bigg),
		\end{align*}
		where $\alpha_a$'s are algebraic numbers. Then, using Proposition \ref{P5} along with Theorem \ref{TH5}, we get that this is a transcendental number and hence it gives us a contradiction.\\
		\noindent
		For the second case when $p \nmid q$, using Eq. (\ref{EQ6}) and proceeding along the similar lines as in the above case, we get the desired outcomes. 
	\end{proof}
	
	\section{Concluding Remarks}
	Theorem \ref{TH6} can be extended for the set $$S_7 = \{\psi_p(r/p^aq^b) + \gamma_p : 1 \leq r < p^aq^b,  (r,pq) = 1\} $$ where $p, q \in \mathcal{J}$ and $m = p^a q^b \in \mathcal{M}$. The proof of this follows a similar approach as of Theorem \ref{TH6} and eventually the result follows by taking the maximal linearly independent set of logarithms of algebraic terms. 
	
	\section{Acknowledgements}
	The authors thank Michel Waldschmidt for some valuable suggestions on an earlier version of the paper. Also, we would like to thank SERB and UGC for providing the partial support.
	\vspace{10mm}

\end{document}